\newtheorem{thm}{Theorem}[section]
\newtheorem{lem}[thm]{Lemma}
\newtheorem{cor}[thm]{Corollary}
\newtheorem{prp}[thm]{Proposition}
\newtheorem{maintheorem}{Theorem}
\theoremstyle{definition}
\newtheorem{qst}[thm]{Question}
\newtheorem{rem}[thm]{Remark}
\theoremstyle{remark}
\newcommand{\Co}{\mathbb{C}}
\newcommand{\R}{\mathbb{R}}
\newcommand{\RR}{\mathbb{R}}
\newcommand{\Z}{\mathbb{Z}}
\newcommand{\ZZ}{\mathbb{Z}}
\newcommand{\orb}{{\mathrm{orb}}}
\newcommand{\To}{\rightarrow}
\newcommand{\Orb}{\mathcal{O}}
\newcommand{\Or}{\mathrm{O}}
\newcommand{\SOr}{\mathrm{SO}}
\newcommand{\SUr}{\mathrm{SU}}
\title[Highly connected orbifolds]{How highly connected can an orbifold be?}
\author[C. Lange]{Christian Lange}
\address{Christian Lange, Ludwig-Maximilians-Universit\"at M\"unchen, Mathematisches Institut\newline\indent Theresienstra{\ss}e 39, 80333 München, Germany}
\address{Ruhr-Universität Bochum, Mathematisches Institut\newline\indent Universitätsstr. 150, 44780 Bochum, Germany}
\email{lange@math.lmu.de, christian.lange@ruhr-uni-bochum.de}
\author[M. Radeschi]{Marco Radeschi}
\address{Marco Radeschi, University of Notre Dame,\newline\indent Department of Mathematics, 255 Hurley, Notre Dame, IN 46556 }
\email{mradesch@nd.edu}
\thanks{}
\begin{document}

\maketitle

\begin{abstract}
On the one hand, we provide the first examples of arbitrarily highly connected (compact) bad orbifolds. On the other hand, we show that $n$-connected $n$-orbifolds are manifolds. The latter improves the best previously known bound of Lytchak by roughly a factor of $2$. For compact orbifolds and in most dimensions we prove slightly better bounds. We obtain sharp results up to dimension $5$.
\end{abstract}


\section{Introduction}

Orbifolds are a generalization of manifolds that incorporate local symmetries. They were introduced by Satake under the name of V-manifolds \cite{MR0079769,Satake} and later rediscovered by Thurston. Orbifolds for instance occur as quotient spaces of Lie group actions or foliations \cite{LW16}, as collapsed limits of manifolds under Gromov-Hausdorff convergence \cite{MR1145256} and as moduli spaces, e.g. in Teichmüller theory. 

Thurston defined the notion of orbifold coverings and showed the existence of an orbifold version of coverings and fundamental groups analogous to the classical one \cite{Thurston}. More generally, every orbifold can be thought as the quotient of an almost free action of a compact Lie group on a manifold, and this allows to define algebraic invariants, like \emph{orbifold homotopy}- and \emph{(co)homology groups}, simply as the usual homotopy and (co)homology groups of the associated Borel construction. In the special case of the orbifold fundamental group one indeed recovers Thurston's definition, because orbifold coverings are in one-to-one correspondence with coverings of a model for the Borel construction.

An orbifold is called \emph{good} if it is covered by a manifold and \emph{bad} otherwise. It is a natural question to ask which measurable parameters tell good and bad orbifolds apart
or identify manifolds among orbifolds. Such identifiers can have also practical applications. The fact that an orbifold which admits a Riemannian metric of constant curvature is always good \cite{MM91} was for instance applied in \cite{Le15} in the context of Alexandrov geometry. In \cite{LW16} Lytchak and Wilking completed the program initiated by Ghys \cite{Gh84}  and Gromoll--Grove \cite{GG88} of classifying Riemannian foliations of spheres, and a large part of the work amounted to showing that a compact $7$-connected $8$-orbifold is in fact a manifold. In the context of Riemannian orbifolds all of whose geodesics are closed \cite{ALR21} the authors applied a cohomology characterization of manifolds among orbifolds, see Proposition \ref{prp:orbifold_manifold}, in order to show that odd-dimensional orbifolds all of whose geodesics are closed are covered by spheres.

In terms of topological parameters, Davis asked whether a contractible orbifold is a manifold \cite[p.~28]{Da11}. A positive answer to this question was provided by Lytchak, who actually showed that a $(2n-2)$-connected $n$-dimensional orbifold is a manifold \cite{Ly13}, see also Section \ref{sub:Lytchak_bound}. In the same paper, \cite{Ly13} Lytchak moreover asked whether arbitrarily highly connected bad orbifolds exist.

In this paper we answer Lytchak's question in the positive, and we improve his bound by roughly a factor of $2$.

On improving the bound we have the following result:

\begin{maintheorem} \label{thm:main_noncompact}
A $n$-connected $n$-orbifold is a manifold for any $n \geq 1$. A $2n$-connected $(2n+1)$-orbifold is a manifold for any $n \geq 1$.
\end{maintheorem}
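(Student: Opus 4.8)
The plan is to argue by contradiction, using the cohomological characterization of manifolds among orbifolds (Proposition~\ref{prp:orbifold_manifold}) together with an Euler class computation on the classifying space $B\Orb$ of $\Orb$ --- the Borel construction, whose homotopy and (co)homology groups are by definition those of $\Orb$. Since $\Orb$ is $n$-connected (resp.\ $2n$-connected) it is in particular simply connected, hence orientable, and Hurewicz together with the universal coefficient theorem gives $H^{i}(B\Orb;\ZZ)=0$ for $1\le i\le n$ (resp.\ $1\le i\le 2n$). Assume $\Orb$ is not a manifold, so that by Proposition~\ref{prp:orbifold_manifold} some local group is nontrivial.

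Fix a local group $\Gamma$ of $\Orb$ that is minimal among the nontrivial ones, let $N$ be a component of its fixed-point stratum, a submanifold of codimension $k$, and let $\Orb_{N}$ be a cone-neighbourhood of $N$, an orbifold bundle of cones $\RR^{k}/\Gamma$ over $N$. Minimality of $\Gamma$ forces $\Gamma$ to act \emph{freely} on the normal sphere $S^{k-1}$: an element with a nonzero fixed normal vector would have a nontrivial proper stabilizer subgroup occurring as the local group at a nearby point. Hence the Borel incarnation $\bar\nu$ of the orbifold normal bundle of $N$ is a rank-$k$ vector bundle over $B\Orb_{N}$, which is a $B\Gamma$-bundle over $N$. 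From $w_{1}(T\Orb)=0$ and the Whitney splitting $T\Orb|_{N}=TN\oplus\bar\nu$ we get $w_{1}(\bar\nu)=w_{1}(TN)$; restricted to the fibre $B\Gamma$ over a point $x\in N$ this vanishes (the tangent bundle of $N$ restricts trivially there), so $\Gamma\le\SOr(k)$, and $\bar\nu$ carries an Euler class $e(\bar\nu)\in H^{k}(B\Orb_{N};\ZZ)$ (with coefficients twisted by $w_{1}(TN)$ if $N$ is not orientable).

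Now $e(\bar\nu)$ is simultaneously nonzero and zero. It is nonzero because its restriction to the fibre $B\Gamma$ over $x$ is the Euler class of $\gamma=E\Gamma\times_{\Gamma}\RR^{k}$: the Gysin sequence of the sphere bundle $E\Gamma\times_{\Gamma}S^{k-1}\simeq S^{k-1}/\Gamma$, a closed $(k-1)$-manifold by freeness (so with $H^{k}=0$ for dimension reasons), shows that $\smile e(\gamma)\colon\ZZ=H^{0}(B\Gamma)\to H^{k}(B\Gamma;\ZZ)$ is surjective, while $H^{k}(B\Gamma;\ZZ)=\widehat H^{k}(\Gamma;\ZZ)\cong\widehat H^{0}(\Gamma;\ZZ)=\ZZ/|\Gamma|\neq 0$ because a nontrivial finite group acting freely on a sphere has periodic integral cohomology of period dividing $k$. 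On the other hand, by excision and the Thom isomorphism $e(\bar\nu)$ is the restriction to $B\Orb_{N}$ of a class in $H^{k}(B\Orb;\ZZ)$ --- the image of the Thom class under $H^{k}(B\Orb,B(\Orb\setminus N);\ZZ)\to H^{k}(B\Orb;\ZZ)$. But $k=\mathrm{codim}\,N\le n$ in the first case, and $k\le 2n$ in the second: the only excluded value there is $k=2n+1$, i.e.\ $N$ a point, which forces $\Gamma=\ZZ/2$ acting freely on the even sphere $S^{2n}$, hence $\Gamma=\langle-\mathrm{id}\rangle$ with $-\mathrm{id}$ orientation-reversing --- contradicting $w_{1}(T\Orb)=0$. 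In either case $H^{k}(B\Orb;\ZZ)=0$, so $e(\bar\nu)=0$. This contradiction proves that every local group is trivial, i.e.\ that $\Orb$ is a manifold.

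I expect the main obstacle to be the stratification bookkeeping underlying the second and third paragraphs: the stratum $N$ need not be closed --- its closure may meet strata with larger isotropy --- so the excision/Thom--Gysin argument must be run relative to the complement of those higher strata, and one must check that discarding them does not introduce new classes into $H^{k}(B\Orb;\ZZ)$ in the relevant range; equivalently, one organizes the argument along a filtration of the singular set and controls the normal Euler classes of all strata simultaneously. Treating the whole normal structure globally, rather than peeling off one singular stratum at a time, is precisely what yields the factor-of-two improvement over Lytchak's bound, and the orientability obstruction that rules out the top-codimensional stratum is what allows the odd-dimensional statement to get by with $2n$ rather than $2n+1$. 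The sharper statements for compact orbifolds and in low dimensions rest on the same mechanism together with the fact --- via Poincaré duality --- that a closed orientable orbifold has non-vanishing rational cohomology in the top degree.
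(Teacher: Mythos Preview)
Your Euler-class mechanism---producing a normal Euler class that is nonzero on a $B\Gamma$-fibre (via periodic cohomology) yet forced to vanish by the connectivity of $\Orb$---is exactly the endgame the paper uses, phrased there as a Gysin/Mayer--Vietoris argument on a tubular neighbourhood of the minimal-codimension stratum. The gap is in the set-up, and it is not the bookkeeping issue you anticipate.

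The paper does \emph{not} handle the stratification by ``treating the whole normal structure globally''. It first proves two lemmas that you omit entirely: (i) under the connectivity hypothesis no local group has $2$-torsion (a Stiefel--Whitney class computation on $T\Orb$ pulled back to $B\ZZ/2$, using Wu's formula to propagate the vanishing of $w_1,\dots,w_{2^{a-1}}$ up to $w_{2^a-1}$), and (ii) a real representation without trivial summands of a finite group of \emph{odd} order is even-dimensional (a Frobenius--Schur indicator calculation). Together these force every singular stratum to have \emph{even} codimension. Only then does one pick the minimal codimension $2k$ and delete $\Sigma_{\ge 2k+2}$: since there is no stratum of codimension $2k+1$, this already makes $\Sigma_{2k}$ closed in the truncated orbifold $\Orb'$, and crucially the pair $(\Orb,\Orb')$ is $(2k+1)$-connected, so $H^{2k}_\orb(\Orb')=0$ and the Euler-class contradiction goes through.

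Your route cannot reach this. If you delete $\Sigma_{\ge k+1}$ to close up your stratum $N$ of codimension $k$, Corollary~\ref{cor:complement_connected} gives only a $k$-connected pair $(\Orb,\Orb')$, so $H^{k}_\orb(\Orb')$ is not forced to vanish and the Thom class need no longer restrict to zero on $B\Orb_N$. Your own caveat (``one must check that discarding them does not introduce new classes into $H^{k}$'') is exactly the point, and without knowing that codimension $k+1$ does not occur there is no such check. The ability to skip one codimension, supplied by the odd-order/even-codimension lemmas, is precisely what buys the factor-of-two improvement over Lytchak. Your orientation argument ruling out isolated fixed points in the $(2n+1)$-dimensional case is correct and pleasant, but in the paper's approach it is unnecessary: once all codimensions are even, $2n+1$ is already excluded.
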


Note that the even-dimensional part of this theorem is only a statement about non-compact orbifolds: A compact $n$-orbifold has a fundamental class and can thus not be $n$-connected. In the compact case, we obtain a better bound by additionally exploiting Lefschetz duality and the classification of homotopy spheres that are covered by spheres \cite{Sje73}.

\begin{maintheorem} \label{thm:main_compact} A compact $(2n-2)$-connected $2n$-orbifold is a manifold for any $n \geq 3$. For $n \geq 3$ not a power of $2$ a compact $(2n-2)$-connected $(2n+1)$-orbifold is a manifold.
\end{maintheorem}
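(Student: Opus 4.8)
The plan is to combine the local-to-global mechanism behind Theorem \ref{thm:main_noncompact} with the two extra ingredients advertised in the introduction: Poincaré–Lefschetz duality, used to control the topology near the non-manifold locus, and Sjerve's classification \cite{Sje73} of homology spheres covered by spheres, used to control the possible links. Throughout, $d\in\{2n,2n+1\}$ is the dimension and $O$ is the orbifold in question.

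First I would reduce the statement to one about rational homology spheres. Since $2n-2\ge 4$, the orbifold $O$ is simply connected, hence orientable, and being a closed orbifold its Borel construction is a rational Poincaré duality space of formal dimension $d$. Feeding $(2n-2)$-connectedness into rational Poincaré duality one checks in both cases $d=2n$ and $d=2n+1$ that $H_*(O;\mathbb{Q})\cong H_*(S^d;\mathbb{Q})$; so it suffices to prove that a compact, $(2n-2)$-connected $\mathbb{Q}$-homology $d$-sphere orbifold is a manifold. Now suppose it is not. By Proposition~\ref{prp:orbifold_manifold} we may pass to the non-manifold locus $\Sigma\subset O$, choose a stratum of minimal codimension $k$ and a point $x$ on it, so that near $x$ the orbifold looks like $\mathbb{R}^{d-k}\times c(L)$ with $L=S^{k-1}/\Gamma$, $\Gamma\le\mathrm{SO}(k)$ acting with $0$ as its only fixed vector. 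Minimality of $k$ forces $L$ to be a closed topological manifold (a non-manifold point of $L$ would produce a bad stratum of smaller codimension), and $L$ is a rational homology $(k-1)$-sphere which is not a homotopy sphere, since otherwise $c(L)\cong\mathbb{R}^k$ and $x$ would not be a bad point.

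The central step is to push the global hypotheses down to this local picture. Deleting an open tubular (cone-bundle) neighbourhood of $\overline{\Sigma}$ produces a compact $d$-manifold with boundary $\hat O$, homotopy equivalent to $O\setminus\Sigma$, whose boundary is a link bundle over $\overline{\Sigma}$ with fibre $L$. Playing off the $(2n-2)$-connectedness of $O$, the fact that $O$ is a $\mathbb{Q}$-homology $d$-sphere, Poincaré–Lefschetz duality on $\hat O$, Poincaré duality on $L$, and the Gysin/Thom description of the link bundle, one obtains two conclusions: the minimal codimension $k$ is large — $\Sigma$ has dimension $0$ or $1$, and dimension $0$ when $d=2n$ — and the link $L$ is in fact an \emph{integral} homology $(k-1)$-sphere, which (after the standard reduction to the case where $\Gamma$ acts freely on $S^{k-1}$) is covered by a sphere. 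At this point \cite{Sje73} applies: either $L$ is a homotopy sphere — contradicting the preceding sentence — or $\pi_1(L)$ is the binary icosahedral group $2I$ and $k-1\equiv 3\pmod 4$.

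It remains to eliminate the binary icosahedral alternative. Because $H_3(2I;\mathbb{Z})\neq 0$, a free linear quotient $S^{k-1}/2I$ is an integral homology sphere only for $k=4$; thus $L$ is the Poincaré homology $3$-sphere sitting along a codimension-$4$ stratum, and one is reduced to a bounding/surgery problem for the Poincaré sphere (its signature-$8$ filling) carried over $\overline{\Sigma}$. The obstruction to filling this in compatibly with the connectivity of $O$ is a characteristic number whose divisibility is governed by the denominators of Bernoulli numbers (equivalently, by the order of the image of the $J$-homomorphism); in dimension $2n$ it always obstructs, giving Theorem~\ref{thm:main_compact}(i), while in dimension $2n+1$ it obstructs exactly when $n$ is not a power of $2$, which is the source of that hypothesis — and when $n$ is a power of $2$ a genuinely bad such orbifold can be assembled, so the restriction is necessary. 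I expect the main obstacle to be precisely this final step: carrying out the duality bookkeeping over a possibly twisted link bundle and matching the resulting obstruction with the arithmetic condition "$n$ is not a power of $2$"; everything preceding it is a careful but essentially formal combination of the exact sequences above with Proposition~\ref{prp:orbifold_manifold}, Theorem~\ref{thm:main_noncompact}, and \cite{Sje73}.
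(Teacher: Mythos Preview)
Your overall strategy --- excise the singular locus, use Lefschetz duality to identify the link as an integral homology sphere covered by a sphere, then invoke \cite{Sje73} --- matches the paper. But you have misidentified where the hypothesis ``$n$ not a power of $2$'' enters, and this leads you to invent a phantom final step. In the paper the condition is used at the very \emph{beginning}, via Lemma~\ref{L:improved-no-2-torsion}: that lemma forbids $2$-torsion in local groups of codimension $<2^a$ whenever the orbifold is $2^{a-1}$-connected, and to cover codimension $2n$ in a $(2n-2)$-connected $(2n+1)$-orbifold one needs a power of $2$ in the interval $(n,\,2n-2]$, which exists precisely when $n$ is not itself a power of $2$. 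With odd-order local groups in hand, Lemma~\ref{L:odd-order} forces even codimension, the Euler-class argument from the proof of Theorem~\ref{thm:main_noncompact} pushes the minimal singular codimension up to $2n\geq 6$, and since the link then has dimension $2n-1\geq 5>3$, \cite{Sje73} gives a contradiction immediately. There is no residual codimension-$4$ Poincar\'e-sphere case to eliminate (the binary icosahedral group has order $120$ and was already ruled out by Lemma~\ref{L:improved-no-2-torsion}), no $J$-homomorphism, and no Bernoulli numbers; moreover the paper does \emph{not} claim the hypothesis is sharp, so your assertion that bad examples can be assembled when $n=2^k$ is unsupported.

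There is also a gap in your proposed shortcut to ``$\dim\Sigma\leq 1$ and $L$ is an \emph{integral} homology sphere'' directly from duality: after deleting only the minimal-codimension stratum, the complement $\hat O$ may still carry deeper singular strata, so it is an orbifold rather than a manifold and you only have \emph{rational} Poincar\'e--Lefschetz duality --- which yields $L$ a rational homology sphere, not enough to feed into \cite{Sje73}. The paper avoids this by first establishing (via Lemmas~\ref{L:improved-no-2-torsion} and~\ref{L:odd-order} and the Euler-class argument) that the singular set sits entirely in the single top codimension, so that the complement genuinely is a compact manifold with boundary and integral Lefschetz duality applies.
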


In particular, our argument simplifies the tricky part (i.e. Section 4) in the work \cite{LW16} of Lytchak and Wilking concerned with showing that a certain compact $7$-connected $8$-orbifold is a manifold. Moreover, our proofs of Theorem \ref{thm:main_noncompact} and \ref{thm:main_compact} show that for orbifolds with nonempty strata only in sufficiently low codimensions their conclusions still hold when assuming less connectivity.

On the constructive side we answer Lytchak's question about the existence of highly connected bad orbifolds in the following way.

\begin{maintheorem} \label{thm:main_existence} For any $n\geq 4$, there exist compact $\lfloor n/2-1\rfloor$-connected bad orbifolds in dimension $n$.
\end{maintheorem}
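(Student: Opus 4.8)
The plan rests on the elementary principle that \emph{a simply connected orbifold which is not a manifold is automatically bad}, since its orbifold universal cover is itself. So for each $n\ge 4$ it is enough to produce a compact orbifold $\mathcal O^n$ with $\pi_1^{\orb}(\mathcal O)=1$, with nonempty singular locus, and with $H_i^{\orb}(\mathcal O)=0$ for $1\le i\le\lfloor n/2-1\rfloor$; by the orbifold Hurewicz theorem the last condition is exactly $\lfloor n/2-1\rfloor$-connectedness.

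I would build $\mathcal O$ by a cone-gluing construction. Fix a finite group $\Gamma$ acting freely and orthogonally on $S^{m-1}$, so that the closed cone $D^m/\Gamma$ is a compact $m$-orbifold with a single singular point whose boundary $L=S^{m-1}/\Gamma$ is a genuine manifold (a spherical space form). For any compact manifold $W$ with $\partial W=L$, the space $\mathcal O=W\cup_L(D^m/\Gamma)$ is a compact $m$-orbifold. The decisive point is that, on classifying spaces, the inclusion of the cone boundary $L\hookrightarrow D^m/\Gamma$ becomes the map $L=S^{m-1}/\Gamma\to B\Gamma$ classifying the principal $\Gamma$-bundle $S^{m-1}\to L$, which is $(m-1)$-connected because its fiber $S^{m-1}$ is $(m-2)$-connected. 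Hence Thurston's van Kampen theorem for orbifold fundamental groups gives $\pi_1^{\orb}(\mathcal O)\cong\pi_1(W)$, while the Mayer--Vietoris/excision comparison forced by the $(m-1)$-connectedness of $L\to B\Gamma$ gives $H_i^{\orb}(\mathcal O)\cong H_i(W)$ for $i<m-1$. Thus $\mathcal O$ is bad as soon as $\pi_1(W)=1$, and in that case it inherits the connectivity of $W$ in all degrees below $m-1$; the homology of $B\Gamma$ only enters in degrees $\ge m$.

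It remains to choose $W$ as highly connected as possible. For $n=2k$ even I would take $\Gamma$ acting freely on $S^{2k-1}$ and construct a compact $(k-1)$-connected manifold $W^{2k}$ with $\partial W=S^{2k-1}/\Gamma$, starting from a simple filling (such as a disk bundle over a complex projective space, which fills a lens space but is only $1$-connected) and performing surgery below the middle dimension to kill $\pi_1$ and then $H_2,\dots,H_{k-1}$. Then $\mathcal O=W\cup(D^{2k}/\Gamma)$ is a simply connected non-manifold $2k$-orbifold which is $(k-1)$-connected, i.e.\ $\lfloor n/2-1\rfloor$-connected. One cannot push this scheme further: Lefschetz duality for $(W,\partial W)$, combined with the nontrivial low-degree homology of $L$ (which records $\pi_1(L)=\Gamma\ne 1$), forces $H_k(W)\ne 0$, so $W$ is never $k$-connected, matching the bound of the theorem. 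For $n=2k+1$ odd I would replace the isolated cone point by a circle of singular points, with model neighbourhood $S^1\times(D^{2k}/\Gamma)$ whose boundary $S^1\times(S^{2k-1}/\Gamma)$ is a manifold, and glue it to a $(k-1)$-connected $W^{2k+1}$ with this boundary obtained by the same surgery procedure (now starting from $D^2\times(S^{2k-1}/\Gamma)$ and killing the residual $\Gamma$). The bookkeeping is unchanged, since $S^1\times L\to S^1\times B\Gamma$ is still $(2k-1)$-connected and $\lfloor n/2-1\rfloor=k-1$ in this case as well.

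The main obstacle is the surgery step: producing the $(k-1)$-connected fillings $W$ with the prescribed space-form (or $S^1$-times-space-form) boundary, and verifying that the homology one is necessarily left behind --- which lives in the middle dimension $k$ and encodes $\pi_1$ of the boundary --- does not descend below degree $k$, so that $W$, and therefore $\mathcal O$, really is $(k-1)$-connected. Two further points need care: the orbifold van Kampen computation establishing $\pi_1^{\orb}(\mathcal O)=\pi_1(W)=1$, which is what makes $\mathcal O$ genuinely bad rather than merely non-manifold in appearance; and the identification of a classifying space for $\mathcal O$ as $W$ with a cone neighbourhood of the singular locus replaced by the corresponding $B\Gamma$, on which the homology comparison in the previous paragraph rests.
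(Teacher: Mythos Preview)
Your overall architecture coincides with the paper's: cone off a free spherical $\Gamma$-quotient, glue to a highly connected bounding manifold $W$, and read off the orbifold connectivity from $W$ via van Kampen and Mayer--Vietoris (using that $L\to B\Gamma$ is $(m-1)$-connected). The paper does exactly this, with the cosmetic difference that it takes $W$ with boundary two copies of the space form and caps off both ends.

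There is, however, a genuine gap in your surgery step. Milnor's procedure for making an $n$-manifold $\lfloor n/2-1\rfloor$-connected by surgeries below the middle dimension requires the manifold to be \emph{stably parallelizable}: to perform surgery on a class in $\pi_i(W)$ with $2i<n$ one must represent it by an embedded sphere with trivial normal bundle, and stable parallelizability is precisely what guarantees this in the subcritical range. Your proposed starting point, a disk bundle over $\mathbb{CP}^{k-1}$, is not stably parallelizable for $k\ge 3$ (already $p_1(\mathbb{CP}^{k-1})\neq 0$), so Milnor's theorem does not apply to it, and there is no reason the required surgeries on the generators of $H_2,\dots,H_{k-1}$ can be carried out. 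The paper supplies exactly the missing hypothesis: it invokes the result of Ewing--Moolgavkar--Smith--Stong that for suitable $p$ the lens space $S^{n-1}/\mathbb{Z}_p$ (respectively $S^{n-2}/\mathbb{Z}_p$) is stably parallelizable, and then takes $W=M\times[0,1]$ (respectively $(M\times S^1)\times[0,1]$), which is therefore parallelizable and to which Milnor's theorem applies verbatim. Once you replace your disk-bundle filling by $M\times[0,1]$ with $M$ a stably parallelizable lens space, the rest of your argument goes through essentially as written.
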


Moreover, in low dimensions we present some specific constructions that yield higher connectedness than provided by Theorem \ref{thm:main_existence}. For instance, there exists a compact, $3$-connected, bad $4$-orbifold. All our results are summarized in the following section.

\subsection{Summary of the results}
Given a dimension $n$, let $\kappa(n)$ be the maximum $k$ such that there exists a bad, $k$-connected, $n$-orbifold, and let $\kappa_c(n)$ be the maximum $k$ such that there exists a bad, $k$-connected, compact $n$-orbifold. Then the statements we prove can be written as follows:

\begin{maintheorem} [Bounds for $\kappa(n)$] The following hold:
\begin{enumerate}
\item $\left\lfloor{n\over 2}\right\rfloor-1\leq\kappa(n)<2\left\lfloor{n\over 2}\right\rfloor$
\item $\kappa(3)=1$, $\kappa(4)=\kappa(5)=3$.
\end{enumerate}
\end{maintheorem}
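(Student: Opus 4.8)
I would prove this as a synthesis of the results above. First, the inequality $\kappa(n)<2\lfloor n/2\rfloor$ in (i) I would read directly off Theorem~\ref{thm:main_noncompact}: for even $n=2m$ an $n$-connected $n$-orbifold is a manifold, so no bad one exists and $\kappa(n)\le n-1=2\lfloor n/2\rfloor-1$; for odd $n=2m+1$ with $m\ge1$ a $2m$-connected $(2m+1)$-orbifold is a manifold, so $\kappa(n)\le 2m-1=2\lfloor n/2\rfloor-1$. Since $\kappa$ ranges over possibly non-compact orbifolds, these are genuine applications of Theorem~\ref{thm:main_noncompact} rather than the trivial fundamental-class bound; and the case $n=1$, where no bad orbifold exists, fits both bounds with the convention $\kappa(1)=-1$. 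The bounds $\kappa(3)\le1$, $\kappa(4)\le3$ and $\kappa(5)\le3$ in (ii) are then the instances $n=3,4,5$.

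For the lower bounds I would use throughout the observation that a simply connected orbifold is its own universal orbifold cover, hence good exactly when it is a manifold; so a highly connected orbifold is bad the moment it carries a single non-manifold point. The bound $\lfloor n/2\rfloor-1\le\kappa(n)$ in (i) is, for $n\ge4$, exactly Theorem~\ref{thm:main_existence} (note $\lfloor n/2-1\rfloor=\lfloor n/2\rfloor-1$); for $n=2$ the teardrop $S^2(p)$ is simply connected and not a surface, hence bad; and for $n=3$, where $\lfloor3/2\rfloor-1=0$, one may take $S^2(p)\times S^1$, all of whose orbifold coverings — $S^2(p)\times S^1$ and $S^2(p)\times\mathbb{R}$ — fail to be manifolds.

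It then remains, for (ii), to exhibit a simply connected bad $3$-orbifold and $3$-connected bad orbifolds in dimensions $4$ and $5$; by the observation this reduces to building sufficiently highly connected compact orbifolds carrying a singular point. For $\kappa(3)\ge1$ I would take, for instance, $S^3$ with the orbifold structure whose singular locus is the Hopf link with isotropy groups $\mathbb{Z}_p$ and $\mathbb{Z}_q$, $\gcd(p,q)=1$ and $p\ne q$ (a Seifert fibration over the football $S^2(p,q)$): a standard presentation shows its orbifold fundamental group is trivial, and it is plainly not a manifold. For $\kappa(4)\ge3$ and $\kappa(5)\ge3$ I would construct explicit $3$-connected, compact, non-manifold orbifolds in dimensions $4$ and $5$, verifying $3$-connectedness on the associated Borel construction; badness is then automatic.

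The main obstacle is precisely this last construction. The general mechanism of Theorem~\ref{thm:main_existence} only produces $1$-connectedness in dimensions $4$ and $5$, so genuinely new low-dimensional examples are needed; and being $3$-connected while carrying a non-manifold point are competing requirements, since the obvious ways of inserting an isolated singularity (capping off a manifold with a cone on a lens space, or on the Poincar\'e homology sphere) create unwanted classes in $\pi_1^{\orb}$ or in $H_2$ of the Borel construction. Arranging that the orbifold homotopy vanishes through degree $3$ while the singular point survives is the delicate point.
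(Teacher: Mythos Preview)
Your reduction of the upper bounds to Theorem~\ref{thm:main_noncompact} and of the general lower bound in (i) to Theorem~\ref{thm:main_existence} is correct and is exactly what the paper does. The gaps are in the specific low-dimensional constructions needed for part~(ii).

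First, your proposed example for $\kappa(3)\ge 1$ does not work. The orbifold with underlying space $S^3$ and singular locus the Hopf link with labels $\Z_p$, $\Z_q$ has orbifold fundamental group $\Z_p\times\Z_q$, not the trivial group: the complement of the Hopf link in $S^3$ is homotopy equivalent to a $2$-torus, so its fundamental group is $\Z^2$ generated by the two meridians $x,y$, and adding the orbifold relations $x^p=y^q=1$ yields $\Z_p\times\Z_q$. In fact this orbifold is \emph{good}---it is the quotient of $S^3\subset\Co^2$ by $\Z_p\times\Z_q$ acting via $(\zeta,\xi)\cdot(z_1,z_2)=(\zeta z_1,\xi z_2)$. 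The paper instead constructs a simply connected bad $3$-orbifold by a different mechanism: a trivalent graph in $S^3$ whose vertex groups are the icosahedral and octahedral rotation groups, arranged so that Seifert--Van Kampen forces both relations $(xy)^4=1$ and $(xy)^5=1$ on top of $x^2=y^3=1$, which kills the group.

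Second, for $\kappa(4)\ge 3$ and $\kappa(5)\ge 3$ you correctly diagnose that capping with cones on lens spaces or the Poincar\'e sphere fails, but you stop short of a construction; this is the substantive missing step. The paper's device in dimension $4$ is a \emph{weighted quaternionic projective space}: take an irreducible representation of $\SUr(2)$ on $\Co^4$; the induced action on $S^7$ is almost free but not free, and since $S^7$ is $3$-connected and $\SUr(2)\cong S^3$ is $2$-connected, the long exact homotopy sequence shows the quotient is a compact $3$-connected bad $4$-orbifold. For dimension $5$ the paper simply takes this example times $\R$. Note that $\kappa$ (unlike $\kappa_c$) allows non-compact orbifolds, so you need not look for the compact $5$-dimensional example you were aiming at; indeed the paper's compact $5$-dimensional biquotient $\SUr(3)/\!/\SUr(2)$ is only $2$-connected.
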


\begin{maintheorem} [Bounds for $\kappa_c(n)$] The following hold:
\begin{enumerate}
\item $\left\lfloor {n\over 2}\right\rfloor-1\leq\kappa_c(n)<2\left\lfloor {n-1\over 2}\right\rfloor$ 
\item If $n\neq 2^k$, $\kappa_c(2n+1)< 2n-2$.
\item $\kappa_c(3)=1$ and $\kappa_c(4)=3$.
\end{enumerate}
\end{maintheorem}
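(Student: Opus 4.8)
This final theorem packages the previously-stated Theorems A–D into explicit bounds on $\kappa(n)$ and $\kappa_c(n)$, so the proof should be mostly a matter of assembling those pieces and checking the arithmetic of the floor functions. I would organize the proof into a lower-bound half (coming from the constructions, Theorem \ref{thm:main_existence} and the low-dimensional examples) and an upper-bound half (coming from the rigidity results, Theorems \ref{thm:main_noncompact} and \ref{thm:main_compact}, together with the remark that a compact $n$-orbifold carries a fundamental class and hence cannot be $n$-connected).

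\smallskip
\noindent\textit{Lower bounds.} For both $\kappa(n)$ and $\kappa_c(n)$, the inequality $\lfloor n/2\rfloor - 1 \le \kappa(n)$, $\kappa_c(n)$ is immediate from Theorem \ref{thm:main_existence} for $n \ge 4$: it produces a compact (hence also non-compact-admissible) bad $\lfloor n/2 - 1\rfloor$-connected $n$-orbifold, and one notes $\lfloor n/2 - 1\rfloor = \lfloor n/2\rfloor - 1$. The cases $n \le 3$ must be handled separately: a bad $1$-connected $3$-orbifold exists (e.g.\ a spherical space form quotient / the standard bad $2$-orbifold crossed appropriately, or the explicit teardrop-type construction referenced in the paper), giving $\kappa(3) \ge 1$ and $\kappa_c(3) \ge 1$. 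For the sharp low-dimensional values I would invoke the explicit constructions the paper promises in low dimensions: a compact $3$-connected bad $4$-orbifold, giving $\kappa(4), \kappa_c(4) \ge 3$, and a $3$-connected bad $5$-orbifold (e.g.\ an open cone or a suspension-type construction on the $4$-dimensional example) for $\kappa(5) \ge 3$.

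\smallskip
\noindent\textit{Upper bounds.} The strict inequality $\kappa(n) < 2\lfloor n/2\rfloor$ is exactly the contrapositive of Theorem \ref{thm:main_noncompact}: writing $n = 2m$ or $n = 2m+1$, that theorem says a $2m$-connected $n$-orbifold is a manifold, and $2m = 2\lfloor n/2\rfloor$. For the compact case, $\kappa_c(n) < 2\lfloor (n-1)/2\rfloor$ follows from the same Theorem \ref{thm:main_noncompact} when $n$ is even (here $2\lfloor (n-1)/2\rfloor = n - 2$, and the stronger even-dimensional compact bound is Theorem \ref{thm:main_compact}, which upgrades the needed input), while for $n$ odd, $n = 2m+1$, Theorem \ref{thm:main_noncompact} already gives that a $2m$-connected orbifold is a manifold and $2\lfloor (n-1)/2\rfloor = 2m$. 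Item (ii) for $\kappa_c$, namely $\kappa_c(2n+1) < 2n-2$ when $n \ne 2^k$, is precisely the second half of Theorem \ref{thm:main_compact}. For the sharp values: $\kappa(3) = 1$ follows since a $2$-connected $3$-orbifold is a manifold by Theorem A ($n = 3$, so $n$-connected suffices — but $2$-connected is what we need, and the bound $2\lfloor 3/2\rfloor = 2$ gives $\kappa(3) < 2$); $\kappa(4) = \kappa(5) = 3$ and $\kappa_c(4) = 3$ follow by combining the $\ge 3$ lower bounds with the upper bounds $\kappa(4) < 4$, $\kappa(5) < 4$ (from $2\lfloor 4/2\rfloor = 2\lfloor 5/2\rfloor = 4$) and $\kappa_c(4) < 4$.

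\smallskip
\noindent\textit{Main obstacle.} Since the heavy lifting is done in the earlier theorems, the only real subtlety is bookkeeping: one must verify each floor-function identity ($\lfloor n/2-1\rfloor = \lfloor n/2\rfloor - 1$, $2\lfloor (n-1)/2\rfloor = n-2$ for $n$ even, etc.) and make sure the small cases $n = 3, 4, 5$ are genuinely covered — in particular that the claimed low-dimensional bad examples ($3$-connected bad $4$- and $5$-orbifolds) actually exist and are compact where asserted, since Theorem \ref{thm:main_existence} only starts to bite at $n \ge 4$ with the weaker connectivity $\lfloor n/2-1\rfloor$, which for $n = 4, 5$ equals $1$ rather than $3$. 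Thus the one place where new content (beyond citing Theorems A–D) is required is exhibiting those sharp low-dimensional constructions, and I expect that to be the part demanding the most care.
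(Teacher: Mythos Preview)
Your overall plan is right: this theorem is a summary, and the proof is an assembly of Theorems~\ref{thm:main_noncompact}, \ref{thm:main_compact}, \ref{thm:main_existence} together with the explicit low-dimensional examples. That matches what the paper does (there is no separate proof in the paper; the bounds are read off the earlier results).

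There is, however, a genuine muddle in your upper-bound paragraph. For even $n$, you write that $\kappa_c(n) < 2\lfloor (n-1)/2\rfloor = n-2$ ``follows from the same Theorem~\ref{thm:main_noncompact}''. It does not: Theorem~\ref{thm:main_noncompact} only tells you that an $n$-connected $n$-orbifold is a manifold, giving $\kappa_c(n)<n$, which is two off from what you need. The bound $\kappa_c(2m)<2m-2$ comes \emph{only} from Theorem~\ref{thm:main_compact} (a compact $(2m-2)$-connected $2m$-orbifold is a manifold). Your parenthetical ``Theorem~\ref{thm:main_compact} \dots upgrades the needed input'' suggests you sense this, but the sentence as written is wrong and should be rewritten so that Theorem~\ref{thm:main_compact} is the cited source for the even case, with Theorem~\ref{thm:main_noncompact} handling the odd case.

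Once you make that correction, a second issue becomes visible that your ``bookkeeping'' check should have caught: Theorem~\ref{thm:main_compact} is stated only for $n\geq 3$, i.e.\ dimension $\geq 6$. Consequently the inequality in (i) is \emph{not} derivable from the earlier theorems at $n=4$; indeed $2\lfloor (4-1)/2\rfloor = 2$, which would give $\kappa_c(4)<2$, in direct contradiction with (iii), $\kappa_c(4)=3$. So part (i) must be read as holding for $n$ large enough (the even case needs $n\geq 6$), with (iii) recording the exceptional small values. Your write-up should flag this range restriction explicitly rather than asserting (i) unconditionally. Your treatment of the odd case, of item (ii), and of the exact values in (iii) is correct.
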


\subsection{Structure of the paper} In Section \ref{sec:prelimiaries} we recall some background about orbifolds. In particular, we recall a cohomological characterization of manifolds among orbifolds, see Proposition \ref{prp:orbifold_manifold}.  In Section \ref{sec:construction_examples} we provide specific examples of highly connected orbifolds in low dimensions. Moreover, we prove Theorem \ref{thm:main_existence} about the existence of arbitrarily highly connected orbifolds. Finally, in Section \ref{sec:obstructions} we prove our Theorems \ref{thm:main_noncompact} and \ref{thm:main_compact}.
\\

Let us close the introduction with the following questions:

\begin{qst} What are the precise values of $\kappa(n)$ and $\kappa_c(n)$?
\end{qst}
\begin{qst}  (When) does $\kappa=\kappa_c$ hold?
\end{qst}

\textbf{Acknowledgements.} We are grateful to Alexander Lytchak for comments on a draft version of this paper.

\section{Preliminaries} \label{sec:prelimiaries}

\subsection{Orbifolds} An \emph{$n$-dimensional Riemannian orbifold} is a metric length space $\Orb$ such that each point in $\Orb$ has a neighborhood that is isometric to the quotient of an $n$-dimensional Riemannian manifold $M$ by an isometric action of a finite group $\Gamma$. Every such Riemannian orbifold has a canonical smooth orbifold structure in the usual sense \cite{La20}. Conversely, every smooth (effective) orbifold can be endowed with a Riemannian metric, and then the induced length metric turns it into a Riemannian orbifold in the above sense. For a point $p$ in $\Orb$ the isotropy group of a preimage of $p$ in a Riemannian manifold chart is uniquely determined up to conjugation in $\Or(n)$. Its conjugacy class  is called the \emph{local group} of $\Orb$ at $p$ and we also denote it as $\Gamma_p$. The point $p$ is called \emph{regular} if this group is trivial and \emph{singular} otherwise. More precisely, an orbifold admits a stratification into manifolds, where the stratum of codimension $k$ is given by
\[
			\Sigma_k = \{ p\in \Orb \mid \mathrm{codim} \mathrm{Fix}(\Gamma_p)=k \}.
\]
In particular, $\Sigma_0$ is the set of regular points.

Riemannian orbifolds for instance arise as a quotient of a Riemannian manifold $M$ by an effective, isometric and almost free (i.e. isotropy groups are finite) action of a compact Lie group $G$. In fact, every Riemannian orbifold arises in this way. Namely, it can be obtained as the quotient of its (orthonormal) frame bundle by the natural $\Or(n)$-action on it. The homotopy type of the corresponding Borel construction $B\Orb := M\times_G EG$, which was first considered by Haefliger \cite{Hae84}, depends only on the orbifold $\Orb$ and not on the specific representation of $\Orb$ as a quotient, see \cite[Proposition~1.51]{AdemLeidaRuan}. Hence, if $\Orb$ is a manifold, we can take $M=\Orb$, $G=\{1\}$ and see that $B\Orb \simeq M$. The orbifold homotopy groups, cohomology groups, etc. of an orbifold $\Orb$ are defined as the corresponding invariants of $B\Orb$. In particular, an orbifold $\Orb$ is by definition $k$-connected, if $B\Orb$ is so. 

The notion of an orbifold covering is defined in \cite{Thurston}. An alternative metric characterization of orbifold coverings is provided in \cite{La20}.

In dimension $1$ every orbifold is covered by $\R$. In dimension $2$ a so-called \emph{football} orbifold (also known as spindle or complex weighted projective line) provide examples of simply connected, bad, compact $2$-orbifold, see Figure \ref{fig:spindle}. In fact, any bad, simply connected $2$-orbifold is of this type \cite{Da11}. 

\begin{figure}
	\centering
		\def\svgwidth{0.11\textwidth}
		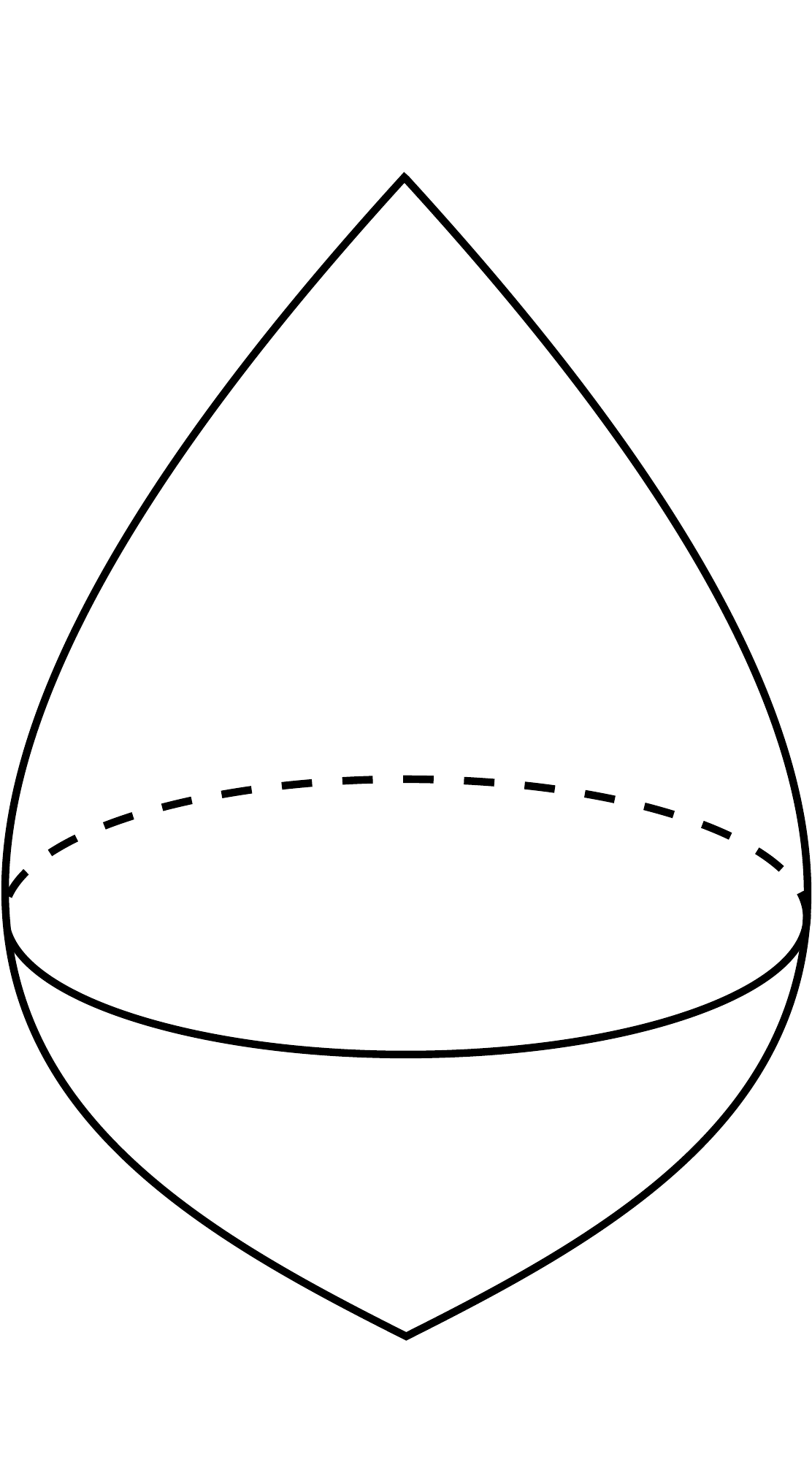
	\caption{A football orbifold is homeomorphic to $S^2$ and has two cyclic singularities of order $p$ and $q$. Its fundamental group is isomorphic to $\Z_{\mathrm{gcd}(p,q)}$.}
	\label{fig:spindle}
\end{figure}

An orbifold (resp. Riemannian orbifold) is diffeomorphic to a manifold (resp. isometric to a Riemannian manifold) if and only if its cohomology groups are non-trivial in only finitely many degrees. This statement appears in a more general form in the work of Quillen \cite[Corollary~7.8]{Qu71}. In particular, it yields an alternative proof for Davis' question whether a contractible orbifold is a manifold \cite[p.~28]{Da11}. In the special case of an orientable orbifold an easier argument is provided in \cite[Proposition~3.3]{ALR21}. For the convenience of the reader we present this argument here in a condensed form. 

\begin{prp}\label{prp:orbifold_manifold} An orientable $n$-orbifold $\Orb$ is a manifold, i.e. all its local groups are trivial, if and only if $H_{\orb}^i(\Orb)=H^i(B\Orb;\Z)=0$ for all $i>n$. This is the case if and only if the cohomology is nontrivial only in finitely many degrees.
\end{prp}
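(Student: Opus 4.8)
The statement bundles three conditions for an orientable $n$-orbifold $\Orb$: that it is a manifold; that $H^i(B\Orb;\Z)=0$ for all $i>n$; and that $H^i(B\Orb;\Z)$ is nonzero for only finitely many $i$. I would prove the cycle of implications: (manifold) $\Rightarrow$ (vanishing above degree $n$) $\Rightarrow$ (finite support) $\Rightarrow$ (manifold). The first implication is immediate from what is recalled above: if all local groups are trivial we may take $M=\Orb$ and $G=\{1\}$, so $B\Orb\simeq\Orb$ is homotopy equivalent to an $n$-manifold, and the cohomology of an $n$-manifold vanishes in degrees $>n$. The second implication is trivial. Hence the entire content lies in the third, which I would establish in contrapositive form: \emph{if $\Orb$ is not a manifold, then $H^i(B\Orb;\Z)\neq 0$ for infinitely many $i$} (which a fortiori contradicts both vanishing above $n$ and finite support).

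\textbf{Setup.}
First I would fix a singular point $p$ and put $\Gamma:=\Gamma_p$, a nontrivial finite group acting effectively and orthogonally on $\R^n$. Writing $\nu:=\mathrm{Fix}(\Gamma)^{\perp}$ for the isotropy representation, effectiveness forces $\Gamma$ to act \emph{faithfully} on $\nu$, and by construction $\mathrm{Fix}(\nu)=0$. Next I would use that $\Orb$ carries a tangent bundle $T\Orb$, classified by a map $B\Orb\to B\SOr(n)$ (here orientability is convenient; $B\Or(n)$ works in general), so that the Pontryagin and Stiefel--Whitney classes of $T\Orb$ live in $H^{*}(B\Orb;\Z)$ and in $H^{*}(B\Orb;\mathbb{F}_q)$ for every prime $q$. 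The inclusion of the orbifold chart $\R^{n}/\Gamma\hookrightarrow\Orb$ induces a map $B\Gamma\simeq B(\R^{n}/\Gamma)\to B\Orb$ — here $B(\R^n/\Gamma)\simeq B\Gamma$ because $\R^n$ is $\Gamma$-equivariantly contractible onto the origin — along which $T\Orb$ pulls back to the Borel vector bundle of the representation $\R^n=\mathrm{Fix}(\Gamma)\oplus\nu$. Since Pontryagin and Stiefel--Whitney classes are insensitive to trivial summands, the pullback of $c(T\Orb)$ to $H^{*}(B\Gamma)$ equals $c(\nu)$ for each such class $c$.

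\textbf{The representation-theoretic core.}
Now choose a prime $q\mid|\Gamma|$ and a subgroup $\Z_q=\langle h\rangle\le\Gamma$. Faithfulness makes $h$ act nontrivially on $\nu$, so the restriction $\nu|_{\Z_q}$ has a nonzero summand $\nu'$ with $\mathrm{Fix}(\nu')=0$. Modulo its nilradical, $H^{*}(B\Z_q;\mathbb{F}_q)$ is a polynomial ring, and a suitable characteristic class of $\nu'$ is visibly a nonzero monomial in it: for $q$ odd, the top Pontryagin class of $\nu'$, a product of the squared Euler classes of its nontrivial $2$-dimensional irreducible summands, each a nonzero multiple of the polynomial generator; for $q=2$, the top Stiefel--Whitney class of $\nu'$, the product of the $w_1$'s of its sign summands. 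In either case this class is non-nilpotent in $H^{*}(B\Z_q;\mathbb{F}_q)$. Since $c(\nu)$ restricts to $c(\nu')$ along $B\Z_q\to B\Gamma$ for the relevant $c$, the class $c(T\Orb)\in H^{*}(B\Orb;\mathbb{F}_q)$ has non-nilpotent image under restriction to $B\Z_q$, hence is itself non-nilpotent; its powers are nonzero in the arithmetic progression of degrees $k\cdot\deg c$, $k\ge 1$, and by the universal coefficient theorem $H^{i}(B\Orb;\Z)\neq 0$ in infinitely many degrees as well. This closes the cycle.

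\textbf{Where the difficulty sits.}
I expect the only genuinely delicate step is the identification of $T\Orb$, restricted to the image of $B\Gamma$, with the Borel bundle of the isotropy representation $\R^n=\mathrm{Fix}(\Gamma)\oplus\nu$: this requires unwinding the Borel construction of an orbifold chart together with the naturality of characteristic classes, after which everything is formal. It is worth flagging that one cannot use the Euler class alone, precisely because $\mathrm{Fix}(\Gamma)$ may be nonzero and adjoining a trivial summand annihilates the Euler class; working with the stable classes (Pontryagin, Stiefel--Whitney) is exactly what makes the argument go through, and also what makes it robust enough to cover the non-orientable case with $\Or(n)$ in place of $\SOr(n)$.
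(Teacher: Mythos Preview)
Your argument is correct and takes a genuinely different route from the paper's proof. The paper argues via the fibration $\SOr(n)/\Z_p \to \mathrm{Fr}(\Orb)_{\Z_p} \to B\Orb$: a fixed point of $\Z_p$ in the frame bundle produces a section of $\mathrm{Fr}(\Orb)_{\Z_p}\to B\Z_p$, so $H^*(B\Z_p)$ injects into $H^*(\mathrm{Fr}(\Orb)_{\Z_p})$; then the Leray--Serre spectral sequence (with untwisted $E_2$-page, using orientability to ensure $\pi_1(B\Orb)$ acts trivially on the fiber cohomology) forces $H^*(B\Orb)$ to be nonzero in infinitely many degrees because the fiber $\SOr(n)/\Z_p$ is a closed manifold. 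No characteristic classes appear at all.

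Your approach instead exhibits explicit non-nilpotent cohomology classes, namely powers of a suitable Pontryagin or Stiefel--Whitney class of $T\Orb$, detected by restriction to $H^*(B\Z_q;\mathbb{F}_q)$. This is more concrete, avoids the spectral sequence, and --- as you note --- works verbatim in the non-orientable case, whereas the paper's argument uses orientability in an essential way (to obtain a simple system of local coefficients). It is also worth remarking that your method is very much in the spirit of the paper's later Lemma~\ref{L:improved-no-2-torsion}, which likewise pulls back Stiefel--Whitney classes of $T\Orb$ along $B\Z_2\to B\Orb$; so in a sense you are anticipating a technique the paper deploys elsewhere. The paper's spectral sequence argument, on the other hand, buys a cleaner statement about integral cohomology directly, without the passage through $\mathbb{F}_q$-coefficients and the universal coefficient theorem.
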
 
\begin{proof} We only need to show the if part. If $\Orb$ is not a manifold, then there exists some $\Z_p \subset \SOr(n)$ which fixes some $x \in \mathrm{Fr}(\Orb)$. In this case the projection $\mathrm{Fr}(\Orb)_{\Z_p}:=\mathrm{Fr}(\Orb) \times_{\Z_p} E\SOr(n) \To B \Z_p$ admits the section $s(b)=[x,b]$. Hence we obtain an inclusion $H^*(B \Z_p) \hookrightarrow H^*(\mathrm{Fr}(\Orb)_{\Z_p})$. Now consider the fibration
\[
		\SOr(n)/\Z_p \To \mathrm{Fr}(\Orb)_{\Z_p} \To B\Orb:=\mathrm{Fr}(\Orb)_{\SOr(n)}
\]
Since $\Orb$ is orientable, $\pi_1(B\Orb)$ acts trivially on $H^*(\SOr(n)/\Z_p)$ \cite[Lemma~3.1]{ALR21} and thus the Leray--Serre spectral sequence satisfies
\[
		E_2^{p,q}=H^p(B\Orb;H^q(\SOr(n)/\Z_p))
\]
and converges to $H^{p+q}(\mathrm{Fr}(\Orb)_{\Z_p})$. Since $\SOr(n)/\Z_p$ is a finite-dimensional manifold and $H^*(\mathrm{Fr}(\Orb)_{\Z_p})$ is nontrivial in infinitely many degrees as we have seen above, the same must be true for $H^*(B\Orb)$.
\end{proof}
\begin{rem} An orbifold can be homeomorphic to a manifold even if some of its local groups are not trivial. The question when this happens is completely answered in \cite{La19}.
\end{rem}

\subsection{Lytchak's bound}\label{sub:Lytchak_bound} We recall the following argument by Lytchak \cite{Ly13}. Suppose an $n$-orbifold $\Orb$ is $k$-connected for some $k\geq 1$. Then $\Orb$ is in particular orientable and so one can realize it as an almost free $G=\SOr(n)$ quotient, of a manifold $M$. The orbit map $o_p: G \To M$ is then $k$-connected as well. Thus for any $l<k$ the map $H^l(M) \To H^l(G)$ is surjective. Since the orbit map factorizes through $\pi_p : G \To G/G_p$, the map $H^l(G/G_p;\Z) \To H^l(G;\Z)$ is surjective for $l<k$ as well. If $\Orb$ is not a manifold, i.e. if $G_p$ is nontrivial for some $p$, then $G \To G/ G_p$ is a nontrivial covering map for this $p$, in which case the image of $H^m(G/G_p;\Z)$ in $H^m(G;\Z) = \Z$ is a subgroup of index $|G_p|$ for $m=\dim G = n(n-1)/2$. Since the free part of $H^*(G;\Z)$ is generated in degree $2n-3$, it follows that $\Orb$ is a manifold if $k\geq 2n-2$.

We remark that this argument can be improved for $4$-orbifolds. Indeed, in this case $H^6(G;\Z)$ is generated in degree $3$, not just in degree $\leq 2n-3 =5$.
Therefore, the above argument shows that a $4$-orbifold $\Orb$ is a manifold, if it is $4$-connected. In Section \ref{sec:obstructions} we will see an alternative proof of this conclusion that works in any dimension.

\section{Examples \& Constructions} \label{sec:construction_examples}

In this section we provide specific examples and general constructions for highly connected orbifolds.

\subsection{Low dimensional examples}

An example of a compact, simply connected, bad $2$-orbifold is shown in Figure $\ref{fig:spindle}$ when $p$ and $q$ are coprime. We will in particular see that compact, simply connected, bad orbifolds exist in all dimensions. An example in dimension $3$ is provided by the following proposition.

\begin{prp} There exists a compact simply connected bad $3$-orbifold.
\end{prp}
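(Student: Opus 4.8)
The plan is to construct the orbifold explicitly as a quotient of $S^3$ by a finite group action that is \emph{not} free, yet whose quotient is simply connected as an orbifold. A clean source of such examples is to take a lens-space-type construction but with the group acting with fixed points. Concretely, I would consider $S^3 \subset \Co^2$ and let a cyclic group $\Z_p$ act by $(z_1,z_2) \mapsto (\zeta z_1, \zeta^q z_2)$ for a suitable $q$; when $\gcd(q,p)\neq 1$ this action has circles of fixed points (when one coordinate is an eigenvector) while still being effective, so the quotient $S^3/\Z_p$ is a bad $3$-orbifold whose underlying space is $S^3$ (or a lens space). The key point is to arrange the weights so that the \emph{orbifold} fundamental group of the quotient is trivial: this should follow because the singular locus carries enough $2$-torsion (or rather, the meridian loops around the singular circles, which generate $\pi_1^{\orb}$, become trivial) — in analogy with the football $2$-orbifold, where two cyclic points of coprime orders $p$, $q$ kill $\pi_1^{\orb}$ even though neither alone does.

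The steps I would carry out: (1) Write down the explicit action of a finite group $\Gamma$ on $S^3$ with $S^3/\Gamma$ having underlying space $S^3$ and singular set a link of circles with prescribed isotropy; a suspension of the football is the most transparent choice — take the football $2$-orbifold $\mathcal{S}$ with cone orders $p$ and $q$, $\gcd(p,q)=1$, and form a suitable orbifold $\mathcal{O}^3$ fibering over $\mathcal S$ or containing it. Actually the cleanest candidate: let $\mathcal O = S^3 / \Z_{pq}$ where $\Z_{pq}$ acts with weights $(1, p)$ on $\Co^2$ — the two singular circles have isotropy $\Z_p$ and $\Z_q$ respectively. (2) Compute $\pi_1^{\orb}(\mathcal O)$ via the orbifold Seifert–van Kampen theorem (Thurston): decompose $\mathcal O$ into two solid-torus pieces, each being $D^2 \times S^1$ with a cyclic cone axis; the fundamental group is generated by the two meridians $x$, $y$ with relations $x^p = 1$, $y^q = 1$, and $xy = 1$ coming from the gluing, forcing $x = y = 1$ since $\gcd(p,q)=1$. (3) Verify $\mathcal O$ is bad, i.e. not covered by a manifold: if it were good, its universal orbifold cover would be a simply connected $3$-manifold with a finite group action having the two cone circles as quotient singular set — one can rule this out either by the classification of spherical/Euclidean $3$-orbifolds (a closed orientable $3$-orbifold with cyclic axis singularities and trivial $\pi_1^{\orb}$ that is good must be a quotient of $S^3$ by a subgroup of $\SOr(4)$, and the relevant subgroups don't produce this singular pattern), or more elementarily by observing that a manifold cover would have to restrict to a manifold cover of each singular circle's normal football-like cross-section, which is impossible since those $2$-orbifold pieces are bad.

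The main obstacle I expect is step (3): rigorously certifying \emph{badness}. Simple-connectedness alone doesn't prevent goodness, so I cannot just quote the football argument. The honest route is to invoke the structure theory of $3$-orbifolds — by Thurston's orbifold geometrization (or, for this very concrete spherical case, by direct classification of finite subgroups of $\SOr(4)$ acting on $S^3$), a closed simply connected good $3$-orbifold is the quotient of $S^3$ by a finite subgroup of $\Or(4)$, and one then checks that no such quotient has exactly two singular circles with coprime cyclic isotropy orders $p, q > 1$. Alternatively, and perhaps more in the spirit of the paper, I would argue cohomologically: compute $H^*_{\orb}(\mathcal O) = H^*(B\mathcal O;\Z)$ and show it is nonzero in infinitely many degrees, which by Proposition \ref{prp:orbifold_manifold} shows $\mathcal O$ is not a manifold, and then separately exclude a nontrivial manifold cover by a covering-space / transfer argument on the singular circles. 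I would lead with the explicit $S^3/\Z_{pq}$ construction and the van Kampen computation, and present the badness via the $3$-orbifold classification as the cleanest self-contained argument.
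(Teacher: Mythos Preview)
Your proposal has a fundamental gap: any orbifold constructed as a global quotient $M/\Gamma$ of a manifold by a finite group is \emph{good} by definition---it is covered by the manifold $M$. In particular your candidate $\mathcal{O}=S^3/\Z_{pq}$ is good, and since $S^3$ is simply connected it is already the universal orbifold cover, giving $\pi_1^{\orb}(\mathcal{O})\cong\Z_{pq}\neq 1$. So the example is neither bad nor simply connected, and no choice of weights can fix this. Your Seifert--van Kampen sketch confirms it if carried out correctly: in the genus-one Heegaard splitting of $S^3$ the meridian of each solid torus is identified with the \emph{longitude} of the other, so with cone cores of orders $p$ and $q$ the amalgamation of $\Z_p\times\Z$ and $\Z_q\times\Z$ over the boundary torus yields $\Z_p\times\Z_q\cong\Z_{pq}$; the relation ``$xy=1$'' you posit simply does not arise.

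The paper's construction is therefore necessarily of a different nature: it specifies an orbifold on $S^3$ by \emph{local data}---a trivalent singular graph whose four vertex groups are the icosahedral group $\mathrm{I}$ and the octahedral group $\mathrm{O}$ (edge labels $2,3,4,5$)---rather than as a global quotient. Simple connectivity is then checked via Seifert--van Kampen: the orbifold is the double of a ball, itself the union of two good balls with groups $\mathrm{I}$ and $\mathrm{O}$ glued over a disk with $\pi_1^{\orb}=\langle x,y\mid x^2,y^3\rangle$, and the pushout $\langle x,y\mid x^2,y^3,(xy)^4,(xy)^5\rangle$ is trivial. You also overcomplicate step~(3): a simply connected good orbifold equals its own universal orbifold cover and is hence a manifold, so once you have a simply connected orbifold with a nontrivial local group, badness is automatic---no geometrization and no cohomological argument is needed.
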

\begin{proof}
There exists a $3$-orbifold with underlying space $S^3$ whose singular set is the trivalent graph shown in Figure \ref{fig:bad_3_orbifold} \cite{Thurston}. The edge weights specify the orders of the corresponding cyclic local groups. The local groups at the two upper vertices are the orientation preserving symmetry group $\mathrm{I}$ of an icosahedron. The local groups at the two lower vertices are the orientation preserving symmetry group $\mathrm{O}$ of a cube. We claim that this orbifold is simply connected as an orbifold. To see this we first observe that it is the double of the dashed $3$-ball. By Seifert--Van Kampen it suffices to show that this $3$-ball is simply connected as an orbifold. The latter is covered by two good orbifolds with fundamental group $\mathrm{I}$ and $\mathrm{O}$, respectively. Their intersection is homotopy equivalent to $\R^2(2,3)$, an open disk with two cyclic orbifold singularities of order $2$ and $3$, respectively. The fundamental group of this disk has two generators $x$ and $y$ represented respectively by a loop around the cyclic singularity of order $2$, and a loop around the cyclic singularity of order $3$. A presentation of this fundamental group is given by $\left\langle x,y \mid x^2,y^3\right\rangle$. The fundamental groups of the two good orbifold balls isomorphic to $\mathrm{O}$ and $\mathrm{I}$ are generated by $x$ and $y$ as well and have presentations $\left\langle x,y \mid x^2,y^3,(xy)^4\right\rangle$ and $\left\langle x,y \mid x^2,y^3,(xy)^5\right\rangle$, respectively. Applying Seifert--Van Kampen again now proves the claim.
\end{proof}

\begin{figure}
	\centering
		\def\svgwidth{0.4\textwidth}
		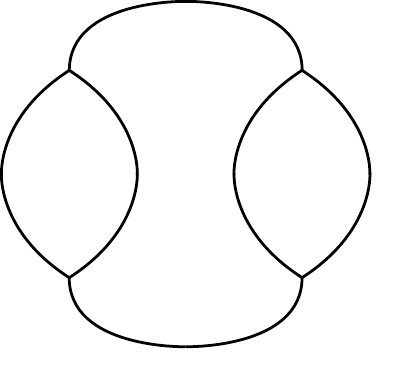
	\caption{Compact simply connected bad $3$-orbifold}
	\label{fig:bad_3_orbifold}
\end{figure}

In dimension $4$ higher connectedness can be achieved.

\begin{prp} \label{prp:3c_dimension4} There exists a compact $3$-connected bad $4$-orbifold.
\end{prp}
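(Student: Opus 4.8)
The plan is to realise the desired example as an almost free, non-free quotient of $S^7$ by $\SUr(2)$, and to read off its connectivity from the associated Borel construction.

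I would start from the $4$-dimensional complex irreducible representation $V=\mathrm{Sym}^3(\CC^2)$ of $\SUr(2)$. Being unitary, it embeds $\SUr(2)\hookrightarrow\Ur(4)\subset\SOr(8)$, so $\SUr(2)$ acts isometrically on the round sphere $S^7\subset V$. The crucial representation-theoretic fact is that all weights of $V$ are nonzero (they are $\pm1$ and $\pm3$): hence no maximal torus --- and therefore no positive-dimensional subgroup --- of $\SUr(2)$ has a nonzero fixed vector in $V$, so every isotropy group of the $\SUr(2)$-action on $S^7$ is finite, i.e.\ the action is \emph{almost free}. It is not free, since the order-$3$ subgroup of a maximal torus fixes a highest weight vector. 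Finally it is \emph{effective}: viewing $V$ as the space of binary cubic forms, a generic form $\ell_1\ell_2\ell_3$ with three distinct linear factors has trivial stabiliser in $\SUr(2)$, because an element fixing its three roots lies in $\{\pm I\}$ and $-I$ acts on $V$ by $-1$. Consequently $\Orb:=S^7/\SUr(2)$ is a compact, orientable, effective Riemannian orbifold of dimension $7-3=4$ which, having a point with nontrivial local group $\ZZ/3$, is not a manifold.

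To see that $\Orb$ is $3$-connected I would use that, in the framework recalled in Section~\ref{sec:prelimiaries}, $B\Orb\simeq S^7\times_{\SUr(2)}E\SUr(2)$, so there is a fibration $S^7\to B\Orb\to B\SUr(2)$. In its long exact homotopy sequence one has $\pi_i(S^7)=0$ for $i\le 6$ and $\pi_i(B\SUr(2))=\pi_{i-1}(\SUr(2))=0$ for $i\le 3$, whence $\pi_i(B\Orb)=0$ for $i\le 3$; thus $\Orb$ is $3$-connected. (One also reads off $\pi_4(B\Orb)=\ZZ$, as it must be for a compact $4$-orbifold.) Since $\Orb$ is in particular simply connected it has no nontrivial orbifold covering, and as it is not a manifold it is bad; alternatively one may invoke Proposition~\ref{prp:orbifold_manifold}, since the Leray--Serre spectral sequence of the above fibration gives $H^{4k}(B\Orb;\ZZ)\cong\ZZ/\lambda$ for all large $k$ and some integer $|\lambda|>1$, so the cohomology is nontrivial in infinitely many degrees.

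The homotopy-theoretic bookkeeping is entirely standard; the one step that needs genuine care is the verification that the $\mathrm{Sym}^3$-representation makes $\SUr(2)$ act almost freely on $S^7$ --- and effectively, but not freely. Once that is in place the rest is formal.
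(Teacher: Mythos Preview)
Your proof is correct and uses essentially the same construction as the paper: the irreducible $4$-dimensional complex representation $\mathrm{Sym}^3(\CC^2)$ of $\SUr(2)$ acting on $S^7$, with $3$-connectedness read off from the homotopy long exact sequence. The paper is terser---it cites \cite{GG88} for the fact that the action is almost free but not free, whereas you verify this directly via the weight decomposition---but the example and the connectivity argument are the same. One small remark: in your effectiveness argument, an element $g\in\SUr(2)$ with $g\cdot f=f$ a priori only \emph{permutes} the three roots in $\mathbb{P}^1$; you need the genericity of the roots (with respect to the $\mathrm{PSU}(2)=\SOr(3)$ action on $S^2$) to force the permutation to be trivial, after which your ``$g\in\{\pm I\}$'' conclusion follows.
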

\begin{proof}
If $\rho$ is an irreducible representation of $\SUr(2)$ on a complex even-dimensional vector space $\Co^n$, then the induced action of $\SUr(2)$ on the unit sphere in $\Co^n$ is almost free but not free, and so the corresponding quotient, which is also referred to as a \emph{weighted quaternionic projective space}, is an orbifold \cite[p.~154]{GG88}. In particular, for $n=4$ the quotient is a compact $4$-orbifold that is not a manifold. Moreover, since $S^7$ is $3$-connected and $\SUr(2)\cong S^3$ is $2$-connected, the long exact sequence in homotopy implies trivially that this $4$-orbifold is $3$-connected.
\end{proof}

Note that by taking products with $\R^n$ Proposition \ref{prp:3c_dimension4} provides examples of (noncompact) $3$-connected $n$-orbifolds for any $n\geq 5$. In dimension $5$ we are only aware of $2$-connected compact examples. In fact, the example in the following proposition can be shown to be not $3$-connected.

\begin{prp} There exists a compact $2$-connected bad $5$-orbifold.
\end{prp}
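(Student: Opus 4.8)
The plan is to realise such an orbifold as a quotient $\Orb = M/G$ in which $G$ is a compact \emph{simply connected} Lie group acting almost freely, but not freely, on a compact $2$-connected manifold $M$ with $\dim M - \dim G = 5$; the prototype is $G = \SUr(2)$ with $\dim M = 8$. Granting such an action, the conclusions follow formally. Since $G$ is simply connected, $BG$ is $2$-connected, so from the Borel fibration
\[
	M \To B\Orb = M\times_G EG \To BG
\]
and the long exact homotopy sequence one reads off $\pi_i(B\Orb)\cong\pi_i(M)$ for $i\le 2$; in particular $\Orb$ is $2$-connected because $M$ is. Using in addition $\pi_3(BG)=\pi_2(G)=0$, the next segment identifies $\pi_3(B\Orb)$ with the cokernel of the map $\pi_3(G)\to\pi_3(M)$ induced by a principal orbit inclusion $G/\Gamma\hookrightarrow M$ (here $\Gamma$ is finite, so $\pi_3(G)\to\pi_3(G/\Gamma)$ is an isomorphism). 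Hence, if $M$ has $\pi_3(M)\neq 0$ and this orbit map is not surjective, then $\Orb$ is $2$-connected but not $3$-connected, as noted in the remark following the statement.

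It remains to see that $\Orb$ is a compact bad orbifold. Compactness is immediate. As the action is not free, $\Orb$ has a nontrivial local group and so is not diffeomorphic to a manifold, by Proposition~\ref{prp:orbifold_manifold}. But $\Orb$ is simply connected, so its orbifold universal cover is $\Orb$ itself; if $\Orb$ were good it would admit an orbifold covering by a manifold $N$, and the manifold universal cover of $N$ would be a simply connected orbifold covering of $\Orb$, hence $\Orb$ itself by uniqueness of the orbifold universal cover --- forcing $\Orb$ to be a manifold, a contradiction. Therefore $\Orb$ is bad.

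The real content, and the point where I expect the main difficulty, is the construction of the action itself. Almost freeness forces $M$ to be non-spherical in this range: an even-dimensional sphere carries no almost free action of a positive-dimensional compact group (its Euler characteristic is nonzero), and $\SUr(2)$ acts almost freely on $S^N\subset\RR^{N+1}$ only when every irreducible summand of $\RR^{N+1}$ is quaternionic, i.e.\ of the form $\mathrm{Sym}^{2k+1}\Co^2$ (the even symmetric powers descend to $\SOr(3)$ and carry a zero weight), and these have real dimension $4k+4$; hence such quotients have dimension divisible by $4$ and never $5$. One is therefore forced to a non-linear model --- for instance a suitably chosen nontrivial $S^3$-bundle over $S^5$ --- carrying a $G$-action engineered so that all isotropy groups are finite. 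Checking this finiteness of isotropy, and computing the image of the principal orbit class in $\pi_3(M)\cong\ZZ$, is the technical heart of the argument.
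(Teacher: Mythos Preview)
Your framework is correct: if $G=\SUr(2)$ acts almost freely but not freely on a compact $2$-connected $8$-manifold $M$, then the Borel fibration and the simple connectivity argument you give do yield a compact, $2$-connected, bad $5$-orbifold. Your elimination of linear sphere models is also correct. But the proposal stops precisely at the step that carries all the content: you explicitly defer the construction of $(M,G)$ and the verification of almost-freeness to a ``technical heart'' that is never supplied. Without an explicit $M$ and an explicit action whose isotropy groups have been checked, there is no proof --- only a specification of what an example would have to look like.

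The paper closes exactly this gap with a concrete biquotient. Take $M=\SUr(3)$, which is indeed a compact $2$-connected $8$-manifold (and, amusingly, is an $S^3$-bundle over $S^5$, so your guess about the shape of $M$ was on target). Define $\rho_1:\SUr(2)\hookrightarrow\SUr(3)$ as the standard block inclusion and $\rho_2:\SUr(2)\to\SOr(3)\hookrightarrow\SUr(3)$ via the double cover, and let $\SUr(2)$ act on $\SUr(3)$ by $g\cdot h=\rho_1(g)\,h\,\rho_2(g)^{-1}$. That this two-sided action is almost free but not free is established by Yeroshkin, so $\Orb=\SUr(3)/\!/\SUr(2)$ is a genuine $5$-orbifold with nontrivial local groups. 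Since both $\SUr(3)$ and $\SUr(2)$ are $2$-connected, the long exact sequence of the fibration $\SUr(2)\to\SUr(3)\to B\Orb$ gives $2$-connectedness of $\Orb$ --- exactly your argument, now attached to an actual example.
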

\begin{proof}
Consider the maps $\rho_1: \SUr(2)\to \SUr(3)$ given by inclusion, and the map $\rho_2:\SUr(2)\to \SUr(3)$ given by the composition $\SUr(2)\to \SOr(3)\to \SUr(3)$. Finally, consider the action of $\SUr(2)$ on $\SUr(3)$ given by $g\cdot h= \rho_1(g)h(\rho_2(g))^{-1}$, which was proved to be almost free but not free by Yeroshkin \cite{Ye15}. In particular the biquotient $\mathcal{\Orb}:=\SUr(3)/\!/\SUr(2)$ is a 5-dimensional orbifold. Since both $\SUr(3)$ and $\SUr(2)$ are 2-connected, the long exact sequence in homotopy implies as above that $\mathcal{\Orb}$ is $2$-connected as well.
%
%
%
\end{proof}

\subsection{Arbitrarily highly connected bad orbifolds.}

Here we present a construction of arbitrarily highly connected orbifolds. It is based on the existence of certain parallelizable lens spaces as well as a surgery construction by Milnor that allows to kill homotopy groups.

\begin{prp}
For any $n\geq 4$, there exist compact $\lfloor n/2-1\rfloor$-connected bad orbifolds in dimension $n$.
\end{prp}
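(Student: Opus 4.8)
The plan is to build the examples by combining two ingredients: first, a supply of bad orbifolds with small singular locus whose Borel construction is easy to control, and second, Milnor-type surgery to kill the low homotopy groups of these orbifolds away from the singular locus. Concretely, I would start from a lens space $L = S^{2m-1}/\Z_k$ that is parallelizable (such lens spaces exist for suitable $k$, e.g. $S^{2m-1}/\Z_k$ is stably parallelizable and, in the right dimensions, actually parallelizable), and take the orbifold $\Orb_0$ to be a ``cone-like'' or mapping-cylinder thickening of $L$ so that $\Orb_0$ is an $n$-orbifold with a single cyclic singular point (or singular sphere) whose local group is $\Z_k$. Since $\Z_k \subset \SOr(n)$ is nontrivial, $\Orb_0$ is not a manifold, and by Proposition \ref{prp:orbifold_manifold} it is genuinely bad (its orbifold cohomology is nontrivial in infinitely many degrees, coming from $H^*(B\Z_k)$). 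The parallelizability of $L$ is exactly what makes the surgeries below possible with trivial normal bundles.

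Next I would perform surgery on $\Orb_0$ \emph{in the regular part}: choose embedded spheres $S^j \hookrightarrow \Sigma_0 \subset \Orb_0$ representing generators of $\pi_j$ for $j \leq \lfloor n/2 - 1\rfloor$, with trivialized normal bundles (available because the ambient manifold chart is parallelizable and $2j < n$, so there is room), and do standard framed surgery $j \leq \lfloor n/2 - 1 \rfloor$. Because $2(\lfloor n/2 -1\rfloor) < n$, these surgeries happen in the middle-or-below range and can be carried out keeping the orbifold structure intact; crucially they do not touch the singular stratum, so the resulting orbifold $\Orb$ is still bad by the same Proposition \ref{prp:orbifold_manifold} argument (the section $B\Z_k \to \mathrm{Fr}(\Orb)_{\Z_k}$ and the spectral sequence argument are unaffected by surgery on the regular part). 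After these surgeries $\pi_j(\Orb) = 0$ for $1 \leq j \leq \lfloor n/2 - 1\rfloor$, i.e. $\Orb$ is $\lfloor n/2-1\rfloor$-connected.

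The remaining point is to verify that surgery is compatible with the orbifold/Borel picture: one must check that doing surgery downstairs corresponds to doing equivariant surgery on the manifold $M$ with its almost free $G$-action (away from the finitely many exceptional orbits), so that $B\Orb$ changes in the expected way and the connectivity count on $\pi_j(B\Orb)$ is correct. This is where I expect the main technical obstacle to lie: ensuring that the surgery can be done $G$-equivariantly with control on framings, and that the trace of the surgery is again (the Borel construction of) an orbifold. The parallelizability of the lens space, together with the dimension inequality $2j < n$, should give enough room to resolve this, but it requires care. Finally, one should double-check compactness is preserved (surgery on a compact orbifold along compact spheres yields a compact orbifold) and that the bound $\lfloor n/2-1\rfloor$ is exactly what the surgery range allows, matching the statement of the proposition.
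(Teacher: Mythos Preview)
Your two ingredients---stably parallelizable lens spaces and Milnor surgery below the middle dimension---are exactly right, but your execution creates a difficulty the paper avoids entirely. You want to build a bad orbifold $\Orb_0$ first and then surger inside its regular part; this forces you to show that classes in $\pi_j^{\orb}(\Orb_0)=\pi_j(B\Orb_0)$ are represented by framed embedded spheres in the regular locus and that such surgery affects $\pi_*^{\orb}$ as expected. You flag this yourself as the main obstacle, and it is a real gap (your $\Orb_0$ is also too vague to begin the argument: a cone on $L$ has boundary, its double has two singular points and $\pi_1^{\orb}=\Z_k$, so already the first surgery step needs justification). A minor additional slip: ``not a manifold'' is not the same as ``bad''; you need $\pi_1^{\orb}=0$ before that inference goes through, so invoking Proposition~\ref{prp:orbifold_manifold} for $\Orb_0$ is premature.

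The paper sidesteps all of this by doing the surgery on an honest manifold \emph{before} introducing any singularities. Take $M=S^{n-1}/\Z_p$ (or $S^{n-2}/\Z_p\times S^1$ for $n$ odd) stably parallelizable, set $W=M\times[0,1]$, and apply Milnor's theorem as a black box to surger $W$, rel boundary, into an $\lfloor n/2-1\rfloor$-connected manifold $W'$ with $\partial W'=M\sqcup M$. Only then cap each boundary component with a copy of $\mathbb{D}^n/\Z_p$ to obtain $\Orb$. Seifert--Van Kampen gives $\pi_1^{\orb}(\Orb)=0$; Mayer--Vietoris, using that $H_i^{\orb}(M)\to H_i^{\orb}(\mathbb{D}^n/\Z_p)$ is an isomorphism for $i<n-1$, gives $H_i^{\orb}(\Orb)=0$ for $1\le i\le\lfloor n/2-1\rfloor$; and Hurewicz finishes. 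No equivariant or orbifold surgery theory is needed at any point.
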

\begin{proof}
The proof is constructive. Let $M$ be either a stably parallelizable lens space $M:=S^{n-1}/\ZZ_p$, whose existence is guaranteed by \cite{EMSS:77}, or the parallelizable product $M:=S^{n-2}/\ZZ_p \times S^1$ of such a lens space with $S^1$, depending on whether $n$ is even or odd.

Take $W:=M\times [0,1]$, a compact parallelizable $n$-manifold with $\partial W=M \cup M$. By the corollary of \cite[Theorem~2]{Mil61} it is possible to perform surgery and turn it into a $\lfloor n/2-1\rfloor$-connected manifold $W'$, with $\partial W'=M \cup M$.

Finally, let $\mathcal{O}$ be the orbifold obtained by gluing two copies of $X:=\mathbb{D}^{n}/\ZZ_p$ to $W'$ along their boundary. We claim that $\mathcal{O}$ is $\lfloor n/2-1\rfloor$-connected as an orbifold. Since $W'$ is simply connected, it follows that $\pi_1^{\orb}(\Orb)=1$ by Seifert--Van Kampen. Moreover, since for $i=1,\ldots \lfloor n/2-1\rfloor$ we have that $H_i^\orb(W')=H_i(W')=0$ and that ${i_1}_*:H_i^\orb(M\cup M)\to  H_i^\orb(X \cup X)$ is an isomorphism

for $i< n-1$, cf. \cite[Proposition~3.5.]{La19}, Mayer-Vietoris implies that $H_i^{\orb}(\Orb)=0$ for $i=1,\ldots, \lfloor n/2-1\rfloor$. Now Hurewicz implies that $\Orb$ is $\lfloor n/2-1\rfloor$-connected.
\end{proof}

\begin{rem}
Killing homotopy groups via surgery becomes more delicate past degree $\left\lfloor{n/ 2}-1\right\rfloor$ since new surgery might reintroduce homology in lower degrees, and it involves analyzing the intersection form in $H_{\left\lfloor{n/ 2}\right\rfloor}(W)$ (when $\dim W$ is even) or the linking form in $\mathrm{Tor}(H_{\left\lfloor{n/ 2}\right\rfloor}(W))$ (when $\dim W$ is odd), see Milnor \cite{Mil61} and Wall \cite{Wal62}. In these cases, however, one also needs vanishing conditions on $\partial W$ such as $H_{\left\lfloor{n/ 2}\right\rfloor}(\partial W)=H_{\left\lfloor{n/ 2}\right\rfloor+1}(\partial W)$ which are in general not satisfied in the examples above.
\end{rem}

\section{Stratification \& Obstructions}\label{sec:obstructions}
The first step in proving our obstruction results is to rule out the presence of $2$-torsion in sufficiently highly connected orbifolds. The following lemma generalizes a conclusion in the proof of \cite[Lemma~3.2]{LW16}.

\begin{lem}\label{L:improved-no-2-torsion}
Let $\Orb$ be a $2^{a-1}$-connected $n$-orbifold for some $a\geq 1$. Then the local groups of $\Orb$ contained in strata of codimension $< 2^a$ have no $2$-torsion.
\end{lem}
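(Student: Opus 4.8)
The plan is to argue by contrapositive: suppose some local group $\Gamma_p$ sitting in a stratum $\Sigma_k$ with $k<2^a$ has $2$-torsion, and derive a contradiction with $2^{a-1}$-connectedness. Since a $1$-connected orbifold is orientable, we may realize $\Orb$ as an almost free quotient $M/G$ with $G=\SOr(n)$, exactly as in Lytchak's argument recalled in Section~\ref{sub:Lytchak_bound}. Having $2$-torsion in $\Gamma_p$ means there is an element of order $2$ in $\Gamma_p\subset \SOr(n)$, i.e. an orientation-preserving involution $\tau$ fixing a preimage $\tilde p$; its fixed subspace in $\R^n$ has even codimension $2m$, and because $p$ lies in $\Sigma_k$ with $k<2^a$ we get $2m\le k<2^a$, so $m<2^{a-1}$, hence $2m\le 2^a-2$. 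The subgroup $\langle\tau\rangle\cong\Z_2$ then acts on the frame bundle $\mathrm{Fr}(\Orb)$ fixing a point, and as in the proof of Proposition~\ref{prp:orbifold_manifold} the Borel construction $\mathrm{Fr}(\Orb)_{\Z_2}:=\mathrm{Fr}(\Orb)\times_{\Z_2}E\SOr(n)\to B\Z_2$ admits a section, giving an injection $H^*(B\Z_2;\Z/2)\hookrightarrow H^*(\mathrm{Fr}(\Orb)_{\Z_2};\Z/2)$.

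Next I would run the Leray--Serre spectral sequence (with $\Z/2$ coefficients) of the fibration
\[
\SOr(n)/\Z_2\To \mathrm{Fr}(\Orb)_{\Z_2}\To B\Orb,
\]
whose $E_2$-page is $H^p(B\Orb;H^q(\SOr(n)/\Z_2;\Z/2))$ and which converges to $H^*(\mathrm{Fr}(\Orb)_{\Z_2};\Z/2)$. The hypothesis that $\Orb$ is $2^{a-1}$-connected forces $H^p(B\Orb;\Z/2)=0$ for $0<p\le 2^{a-1}$, so on the $E_2$-page only the column $p=0$ and columns $p\ge 2^{a-1}+1$ are nonzero. On the other hand, the fiber $\SOr(n)/\Z_2$ is a closed manifold of some fixed dimension $N$, so $H^q(\SOr(n)/\Z_2;\Z/2)=0$ for $q>N$. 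Since the $p=0$ column computes exactly $H^q(\SOr(n)/\Z_2;\Z/2)$, which is finite-dimensional, but $H^*(\mathrm{Fr}(\Orb)_{\Z_2};\Z/2)$ contains the polynomial ring $H^*(B\Z_2;\Z/2)\cong\Z/2[t]$ and is thus nonzero in arbitrarily high degrees, some contributions must come from columns $p\ge 2^{a-1}+1$; more precisely the total degree $p+q$ where new classes appear is at least $2^{a-1}+1$. This alone is not yet a contradiction, so the real point is to locate the class in $\mathrm{Fr}(\Orb)_{\Z_2}$ more precisely using the structure of the fixed-point inclusion, and to compare with the fact that $H^*(\mathrm{Fr}(\Orb)_{\Z_2};\Z/2)$ can also be analyzed via the residual action; here I expect the localization theorem for $\Z_2$-actions (Borel, Quillen) to be the right tool, showing that the image of $H^*(B\Z_2)$ under the section is controlled by the normal data of the fixed set, whose dimension is governed by $2m\le 2^a-2$.

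The step I expect to be the main obstacle is converting the crude "nonzero in high degrees" observation into the sharp bound "codimension $<2^a$"; naively the spectral sequence only gives something like codimension $<2^{a-1}$, so one needs to exploit that the relevant cohomology classes on $B\Z_2$ live in degrees that are pulled back with a degree shift equal to the codimension $2m$ of the fixed set (an Euler/Thom class contribution), which is where the extra factor of $2$ enters — this mirrors how Lytchak's index-$|G_p|$ argument gets improved by the parallelizable-fiber phenomenon noted at the end of Section~\ref{sub:Lytchak_bound}. Concretely, I would combine the section-induced injection with the Gysin sequence of the sphere bundle of the normal bundle to $\mathrm{Fix}(\tau)$ inside $\mathrm{Fr}(\Orb)$, which produces a class of degree exactly $2m$ mapping to a unit times $t^{m}$ under the section, and then feed $2m<2^a$ against the connectivity vanishing range $p\le 2^{a-1}$ in the spectral sequence — the mismatch being resolved precisely when $2m$ can be as large as $2^a-1$, i.e. $2^a-2$ by parity. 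The bookkeeping of which differentials in the spectral sequence could a priori kill these classes, and checking they cannot because of the polynomial-ring structure downstairs, is the part requiring the most care.
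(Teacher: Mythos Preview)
Your setup is fine --- locating an involution $\tau\in\Gamma_p$ with $-1$-eigenspace of dimension $2m<2^a$ is exactly how the paper begins --- but your proposed mechanism for extracting the contradiction has a genuine gap, and you yourself flag it: the spectral sequence of $\SOr(n)/\Z_2\to\mathrm{Fr}(\Orb)_{\Z_2}\to B\Orb$ together with the section only tells you that the polynomial generator $t\in H^1(B\Z_2;\Z/2)$ survives, and that contributions outside the $p=0$ column start at $p\ge 2^{a-1}+1$. That is a statement about the range $\le 2^{a-1}$, not $<2^a$; nothing in the Gysin/localization sketch you give explains where the extra factor of $2$ would come from, and the sentence about ``a class of degree exactly $2m$ mapping to a unit times $t^m$'' does not produce a contradiction either, since $t^m$ lives in degree $m\le 2^{a-1}-1$, which is within the allowed range anyway.

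The paper's argument is quite different and avoids the spectral sequence entirely. Instead of working on $\mathrm{Fr}(\Orb)_{\Z_2}$, it pulls back the \emph{tangent bundle} $T\Orb$ along $B\Z_2\to B\Orb$; the resulting bundle over $\R\mathbb{P}^\infty$ is $2m$ copies of the tautological line plus a trivial summand, so its total Stiefel--Whitney class is $(1+w)^{2m}$. Writing $2m$ in binary shows $w_{2^{e_1}}(V)\neq 0$ for the smallest binary digit $2^{e_1}$ of $2m$, and in particular some $w_j(V)\neq 0$ with $0<j<2^a$. The factor of $2$ you are missing enters via the Wu formula: because $\Orb$ is $2^{a-1}$-connected one has $w_1(T\Orb)=\cdots=w_{2^{a-1}}(T\Orb)=0$, and the Steenrod-square relations among Stiefel--Whitney classes then force $w_j(T\Orb)=0$ for \emph{all} $j<2^a$, hence also for the pullback $V$. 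That is the key idea your proposal lacks; without a characteristic-class argument (or an equivalent Steenrod-operation input) the spectral-sequence route will not reach the sharp codimension bound $2^a$.
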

\begin{proof} In any case $\Orb$ is simply connected and so we can realize it as an $\SOr(n)$ quotient of its frame bundle. Suppose a local group $\Gamma_p$ of $\Orb$ contains a subgroup $\Gamma_0$ of order $2$ and a fixed point subspace of codimension $<2^a$. The image of $\SOr(n)p \times E\SOr(n) \subset \mathrm{Fr}(\Orb) \times E\SOr(n)$ under the projection $\mathrm{Fr}(\Orb) \times E\SOr(n) \rightarrow B \Orb$ is a model for the classifying space of $\Gamma_p$. If we pull back the tangent bundle $T\Orb$ via the map $B\Gamma_0 \To B \Gamma_p \hookrightarrow B\Orb$ we get a bundle isomorphic to $V=E\Gamma_0 \times_{\Gamma_0} \R^n$. 

Suppose that the non-zero element $\iota \in \Gamma_0 \subset \SOr(n)$ has $2m<2^a$ times the eigenvalue $-1$. Then we can write $2m$ ``in binary'' as $2m=\sum_{j=1}^N 2^{e_j}$, where $0< e_1<e_2<\ldots e_N<a$. Furthermore, $V$ is a bundle over $\R\mathbb{P}^{\infty}\cong B\Gamma_0$ that decomposes as the sum of $2m$ canonical line bundles and $n-2m$ trivial line bundles. Thus the total Stiefel--Whitney class is given by
\[
(1+w)^{2m}=\prod_{j=1}^m (1+w)^{2^{e_j}}=\prod_{j=1}^m (1+w^{2^{e_j}})=1+w^{2^{e_1}}+R
\]
where $1$ is the generator of $H^0(\R\mathbb{P}^{\infty};\Z_2)$, $w$ is the generator of $H^1(\R\mathbb{P}^{\infty};\Z_2)\cong \Z_2$, and $R$ is a multiple of $w^{2^{e_1}+1}$. This implies that $w_{2^{e_1}}(V)\neq 0$. However, since $V$ is a pullback of $T\Orb$, and $\Orb$ is $2^{a-1}$-connected, we have $w_1(V)=\ldots =w_{2^{a-1}}(V)=0$. Moreover, since the Stiefel-Whitney classes $w_j(V)$, $j<2^a$ are generated by $w_1(V),\ldots, w_{2^{a-1}}(V)$ via Steenrod powers \cite[p.~94]{MS74}, it follows that $w_j(V)=0$ and thus for all $j<2^a$. This is a contradiction with $w_{2^{e_1}}(V)\neq 0$, since by assumption $2^{e_1}<2m<2^a$.
\end{proof}

It turns out that the absence of $2$-torsion has strong representation-theoretical implications as the following lemma shows. It for instance reflects the fact that any irreducible subgroup of $\SOr(3)$ contains elements of order $2$.

\begin{lem}\label{L:odd-order}
A real representation without trivial components of a finite group $G$ of odd order has even dimension.
\end{lem}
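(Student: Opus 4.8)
The plan is to use character theory over $\mathbb{C}$ and then descend to $\mathbb{R}$. Let $G$ be a finite group of odd order, and let $\rho$ be a real representation of $G$ with no trivial summand, of dimension $d$; we want to show $d$ is even. Complexify: $\rho_{\mathbb{C}} = \rho \otimes_{\mathbb{R}} \mathbb{C}$ is a complex representation of the same dimension $d$, and since $|G|$ is odd, no nontrivial complex irreducible of $G$ is self-dual (because the inverse of an odd-order element is never conjugate to it in a way that forces a real character; more precisely, the only irreducible with real-valued character is the trivial one — this is the classical fact that a group of odd order has a unique real-valued irreducible character). Hence every nontrivial irreducible $V$ appearing in $\rho_{\mathbb{C}}$ comes paired with its distinct dual $\bar V$, and since $\rho_{\mathbb{C}}$ is defined over $\mathbb{R}$ it is self-conjugate, so $V$ and $\bar V$ occur with the same multiplicity. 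Thus $\rho_{\mathbb{C}}$ decomposes as a sum of terms $V \oplus \bar V$ (plus copies of the trivial representation, which are excluded by hypothesis since a trivial summand over $\mathbb{C}$ would force one over $\mathbb{R}$), each of even dimension, so $d = \dim_{\mathbb{C}} \rho_{\mathbb{C}}$ is even.

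The key steps, in order, are: first, reduce to the complexification and record that $\dim_{\mathbb{R}}\rho = \dim_{\mathbb{C}}\rho_{\mathbb{C}}$; second, invoke the classical result (Burnside/Feit–Thompson-era folklore, provable directly by counting: the number of real-valued irreducible characters equals the number of conjugacy classes fixed by $g \mapsto g^{-1}$, which for odd order is just the class of the identity) that the trivial character is the only real-valued irreducible character of an odd-order group; third, conclude that every nontrivial complex irreducible constituent $V$ of $\rho_{\mathbb{C}}$ satisfies $V \not\cong \bar V$; fourth, use that $\overline{\rho_{\mathbb{C}}} \cong \rho_{\mathbb{C}}$ (as $\rho_{\mathbb{C}}$ admits a real structure) to pair off constituents, so the multiplicities of $V$ and $\bar V$ agree; fifth, since the hypothesis excludes trivial summands, write $\rho_{\mathbb{C}} \cong \bigoplus_i (V_i \oplus \bar V_i)$ and read off that the dimension is even.

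Alternatively — and perhaps more in the spirit of a self-contained one-line argument — one can avoid citing the real-valued-character fact by a direct determinant/eigenvalue argument: for $g \in G$ of odd order, $\rho(g) \in \Or(d)$ has order dividing $|g|$, which is odd, so $\rho(g)$ has no eigenvalue $-1$; hence $\det \rho(g) = +1$, i.e. $\rho$ lands in $\SOr(d)$. If $d$ were odd, then any element of $\SOr(d)$ of finite order has $+1$ as an eigenvalue, giving a nonzero fixed vector for each $\rho(g)$ individually — but to get a global trivial summand one would average, and the averaging argument does require a bit more care. The cleanest honest route stays with character theory as above, so I would present that.

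The main obstacle is purely expository: isolating exactly which classical fact to quote and at what level of detail. The substantive content is the statement that an odd-order group has no nontrivial self-dual irreducible, and one must decide whether to cite it or to reprove the two-line counting argument ($|\{\chi \in \mathrm{Irr}(G) : \chi = \bar\chi\}| = |\{g \in G : g \sim g^{-1}\}|$, and in odd order $g \sim g^{-1}$ forces $g = 1$ since $g$ and $g^{-1}$ generate the same cyclic subgroup of odd order and a generator is conjugate to its inverse only when the element is an involution or trivial — the odd case pins it to the identity). Everything else is bookkeeping with multiplicities of constituents, which I would not spell out in full.
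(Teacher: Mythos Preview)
Your argument is correct. Both your proof and the paper's rest on the same underlying bijection $g\mapsto g^2$ for odd $|G|$, but they package it differently. The paper reduces to a single nontrivial irreducible real summand, observes its complexification stays irreducible (odd dimension), and then computes the Frobenius--Schur indicator directly:
\[
\iota_\rho=\frac{1}{|G|}\sum_{g}\chi_\rho(g^2)=\frac{1}{|G|}\sum_{g}\chi_\rho(g)=\langle\chi_\rho,\chi_{\rho_0}\rangle=0,
\]
contradicting realizability. You instead invoke the equivalent consequence that an odd-order group has no nontrivial self-dual complex irreducible, and then pair off the constituents of $\rho_{\mathbb{C}}\cong\overline{\rho_{\mathbb{C}}}$ into $V\oplus\bar V$ blocks. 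Your route avoids naming the Frobenius--Schur indicator and is a bit more structural; the paper's route is a one-line computation once one has reduced to an irreducible. One small expository point: your parenthetical justification for ``$g\sim g^{-1}\Rightarrow g=1$ in odd order'' is garbled (the ``involution or trivial'' clause is not the right mechanism); the clean argument is that $hgh^{-1}=g^{-1}$ gives $h^2\in C_G(g)$, and in odd order $h\in\langle h^2\rangle$, so $h\in C_G(g)$ and $g=g^{-1}$.
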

\begin{proof} Assume that the statement is false. Then there exists a nontrivial odd-dimensional irreducible real representation of $G$. Since it has odd dimension, its complexification $\rho: G\to U(V)$ (which we can turn to be unitary as $G$ is finite) is irreducible as well. Let $\chi_\rho:G\to \RR$ be the character of this representation: $\chi_\rho(g)=\operatorname{tr}(\rho(g))$. The fact that $G$ has odd order implies that the map $g \mapsto g^2$ is a bijection of $G$. This allows us to compute the Schur-indicator of $\rho$:
\[
		\iota_{\rho} := {1\over |G|}\sum_{g\in G}\chi_\rho(g^2)= {1\over |G|}\sum_{g\in G}\chi_\rho(g)=\langle \chi_\rho,\chi_{\rho_0}\rangle=0.
\]
Here $\rho_0$ denotes the trivial character and the last relation holds by the Schur orthogonality relation since $\rho$ nontrivial. Now the claim follows by contradiction since an irreducible complex representation with trivial Schur indicator cannot be realized over the real numbers \cite[Theorem~4.5.6]{Wol84}.
\end{proof}

The conclusion of Lemma \ref{L:odd-order} will be essential in our proof as it allows us to skip every second dimension. This will be necessary because later on we want to apply the following proposition to turn a highly connected orbifold into a $k$-connected orbifold without singular strata in codimension $\geq k+1$.

Recall that given an orbifold $\Orb$, a \emph{strong suborbifold} is a subset $\Sigma\subset \Orb$ such that for every $p\in \Sigma$ and every chart $\phi:U\to V\subset \Orb$ around $p$ (i.e. $U$ is a Riemannian manifold with an isometric action of a finite group $\Gamma$, $\phi$ is $\Gamma$-invariant and induces an isometry $U/\Gamma\simeq V$), the preimage $\phi^{-1}(\Sigma\cap V)$ is a submanifold fixed set-wise by the group $\Gamma$.

\begin{prp}\label{P:connected-complement}
Let $\Orb$ be a simply connected $n$-orbifold with a closed strong suborbifold ${\Sigma}\subset \Orb$ of codimension $k$. Then the pair $(\Orb, \Orb\setminus\Sigma)$ is $(k-1)$-connected in the orbifold topology.
\end{prp}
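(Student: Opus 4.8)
The plan is to reduce the statement to a transversality/general-position argument carried out on the Borel construction, while exploiting the fact that a strong suborbifold lifts compatibly to local charts and hence to $B\Orb$. First I would set up the right model: realize $\Orb$ as the quotient of its frame bundle $\mathrm{Fr}(\Orb)$ by the $\SOr(n)$-action (this is legitimate since $\Orb$ is simply connected, hence orientable), so that $B\Orb \simeq \mathrm{Fr}(\Orb)\times_{\SOr(n)} E\SOr(n)$. The preimage $\widetilde{\Sigma}$ of $\Sigma$ in $\mathrm{Fr}(\Orb)$ is an $\SOr(n)$-invariant closed submanifold of codimension $k$ (the codimension is preserved because frames of $T\Orb$ restricted over $\Sigma$ split off the normal directions), and $\mathrm{Fr}(\Orb\setminus\Sigma) = \mathrm{Fr}(\Orb)\setminus\widetilde{\Sigma}$. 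Passing to the Borel construction, we get a closed codimension-$k$ submanifold $\Sigma_{B}:=\widetilde{\Sigma}\times_{\SOr(n)} E\SOr(n)$ of the (infinite-dimensional, but homotopically nice) space $B\Orb$, with $B(\Orb\setminus\Sigma) \simeq B\Orb\setminus\Sigma_{B}$.

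The core of the argument is then the classical fact that removing a closed codimension-$k$ submanifold from a manifold (or, more carefully, from a suitable ANR built as a colimit of finite-dimensional manifolds) does not change $\pi_i$ for $i\le k-2$ and gives a surjection on $\pi_{k-1}$; equivalently the pair is $(k-1)$-connected. Concretely, I would argue at the level of homotopy groups directly: a map $f:(D^i,\partial D^i)\to (B\Orb, B\Orb\setminus\Sigma_B)$ with $i\le k-1$ can, after a homotopy rel $\partial D^i$, be pushed off $\Sigma_B$. The honest way to do this without worrying about the infinite-dimensional colimit is to note that $f$ and the homotopy have compact image, hence factor through a finite stage $\mathrm{Fr}(\Orb)\times_{\SOr(n)} E\SOr(n)^{(N)}$ for $N$ large, which is a genuine finite-dimensional manifold containing $\Sigma_B^{(N)}$ as a closed codimension-$k$ submanifold; there one applies smooth transversality to make $f$ miss $\Sigma_B^{(N)}$, using $i < k$. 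Combining this "push-off" with the evident vanishing $\pi_i(B\Orb,B\Orb\setminus\Sigma_B)=0$ deduced from the long exact sequence (plus the Hurewicz-type translation into the relative $\pi_0$/$\pi_1$ statements in the low degrees, where simple-connectivity of $\Orb$ is used) yields that the pair is $(k-1)$-connected.

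I expect the main obstacle to be not the geometric transversality input, which is standard, but the careful bookkeeping needed to make it rigorous on the Borel construction: one must (a) check that $\Sigma$ being a strong suborbifold really does produce an $\SOr(n)$-invariant submanifold $\widetilde{\Sigma}\subset\mathrm{Fr}(\Orb)$ of the same codimension, locally around each singular point — here the definition of strong suborbifold (preimage in each chart is a $\Gamma$-invariant submanifold) is exactly what licenses gluing the local lifts; and (b) justify the reduction to a finite-dimensional approximation $E\SOr(n)^{(N)}$, or alternatively invoke the fact that the inclusion $B\Orb\setminus\Sigma_B \hookrightarrow B\Orb$ of the complement of a closed, locally flat, codimension-$k$ submanifold in a space that is an increasing union of manifolds is $(k-1)$-connected. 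A secondary subtlety is handling the low-degree cases $k=1$ (trivial, since "$(-1)$-connected" only asserts the complement is nonempty, which holds as $\Sigma$ is a proper closed submanifold of a connected space when $k\ge 1$) and $k=2$ (where "$(k-1)$-connected $=1$-connected" reduces to showing the complement is path-connected and that any loop in $B\Orb$ is homotopic into the complement, using $\pi_1(B\Orb)=1$), so that no Hurewicz argument is needed there; for $k\ge 2$ one gets the higher connectivity purely from the transversality push-off.
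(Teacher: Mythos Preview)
Your approach is correct and genuinely different from the paper's. The paper works with a tubular neighbourhood $D\Sigma$ of $\Sigma$ and its boundary $S\Sigma$: it shows that the preimages in the frame bundle give a disk bundle $\pi^{-1}(D\Sigma)\to\pi^{-1}(\Sigma)$ with sphere bundle $\pi^{-1}(S\Sigma)$, uses the 5-lemma (twice) to get that $\widehat{S\Sigma}\to\widehat{D\Sigma}$ is $(k-1)$-connected on classifying spaces, then applies Seifert--Van Kampen to kill $\pi_1$ of the complement, excision to get $H^\orb_m(\Orb,\Orb\setminus\Sigma)\cong H^\orb_m(D\Sigma,S\Sigma)=0$ for $m\le k-1$, and finally relative Hurewicz. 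Your transversality argument on finite-dimensional approximations of the Borel construction is shorter and more elementary, and directly yields $\pi_i(B\Orb,B\Orb\setminus\Sigma_B)=0$ for $i\le k-1$ without Hurewicz, Seifert--Van Kampen, or excision; indeed the simple-connectivity hypothesis enters only through orientability (to use $\SOr(n)$), not in the ``low-degree'' cases as you suggest. The trade-off is that the paper's proof produces, as a by-product, the explicit sphere bundle $\widehat{S\Sigma}\to\widehat{D\Sigma}$, and this structure is reused verbatim in the proof of Theorem~\ref{thm:main_noncompact} (the Gysin-sequence argument with the Euler class $e$); with your proof one would have to set up that bundle separately at that point.
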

\begin{proof}
Let $D\Sigma$ denote a tubular neighbourhood of $\Sigma$, with smooth boundary $S\Sigma:=\partial D\Sigma$.
At any point $p\in \Sigma$, consider a local chart $\pi_p:\tilde{U}_p\to U_p$, where $\tilde{U}_p$ is an open set in $\RR^n$ on which the local group $\Gamma_p$ acts by fixing the (singleton) preimage $\tilde{p}$ of $p$. We can furthermore introduce an orbifold Riemannian metric such that the local group $\Gamma_p$ acts by isometries.

By definition of strong suborbifold, the preimage $\tilde{\Sigma}_p:=\pi_p^{-1}(\Sigma\cap U_p)$ is a submanifold of $\tilde{U}_p$, fixed (as a set) by the local group $\Gamma_p$. In particular, $\pi_p^{-1}(D\Sigma)=D(\tilde{\Sigma}_p)$ is preserved by the action of $\Gamma_p$, as well as $\partial D(\tilde{\Sigma}_p)=\pi_p^{-1}(S\Sigma)$.
\\

Consider now the frame bundle $\pi:\mathrm{Fr}(\Orb)\to \Orb$, and recall that on each local chart $U_p$, one has $\pi^{-1}(U_p)\simeq \tilde{U}_p\times_{\Gamma_p}\SOr(n)$. In particular we have:
\begin{align*}
\pi^{-1}(\Sigma\cap U_p)=& \tilde{\Sigma}_p\times_{\Gamma_p}\SOr(n)\\
\pi^{-1}(D\Sigma\cap U_p)=&D(\tilde{\Sigma}_p)\times_{\Gamma_p}\SOr(n)\\
\pi^{-1}(S\Sigma\cap U_p)=&(\partial D(\tilde{\Sigma}_p))\times_{\Gamma_p}\SOr(n)=\partial (D(\tilde{\Sigma}_p)\times_{\Gamma_p}\SOr(n))
\end{align*}
Since the projection $D(\tilde{\Sigma}_p)\to \tilde{\Sigma}_p$ is a $\Gamma_p$-equivariant trivial disc bundle, with sphere bundle $\partial D(\tilde{\Sigma}_p)\to \tilde{\Sigma}_p$, there is an induced map
\[
\pi^{-1}(D\Sigma\cap U_p)=D(\tilde{\Sigma}_p)\times_{\Gamma_p}\SOr(n)\to \tilde{\Sigma}_p\times_{\Gamma_p}\SOr(n)=\pi^{-1}(\Sigma\cap U_p)
\]
which is again a disc bundle, with sphere bundle given by $\pi^{-1}(S\Sigma\cap U_p)$.

In particular, $\pi^{-1}(D\Sigma)$ is a disk bundle over $\tilde{\Sigma}=\pi^{-1}(\Sigma)$, with sphere bundle and $\pi^{-1}(S\Sigma)$. Consider the map of fibrations:
\[
	\begin{xy}
		\xymatrix
		{
		 S^{k-1} \ar[d] \ar[r] & \mathbb{D}^k  \ar[d]\\
		\pi^{-1}(S\Sigma) \ar[r]\ar[d] & \pi^{-1}(D\Sigma) \ar[d]\\
		 \tilde{\Sigma} \ar@{=}[r]&\tilde{\Sigma}
		}
	\end{xy}
\]
since the inclusion $S^{k-1}\to \mathbb{D}^k$ is $(k-1)$-connected, it follows by the 5-lemma that the inclusion $\pi^{-1}(S\Sigma)\to \pi^{-1}(D\Sigma)$ is $(k-1)$-connected as well. Furthermore, since the inclusion $\pi^{-1}(S\Sigma)\to \pi^{-1}(D\Sigma)$ is $\SOr(n)$-equivariant, it induces fibrations
\[
	\begin{xy}
		\xymatrix
		{
		 &\pi^{-1}(S\Sigma) \ar[r]\ar[d] &\pi^{-1}(D\Sigma)\ar[d]&\\
		 \widehat{S\Sigma}\ar@{=}[r]& \pi^{-1}(S\Sigma)\times_{\SOr(n)}E\SOr(n)\ar[r]\ar[d]&\pi^{-1}(D\Sigma)\times_{\SOr(n)}E\SOr(n)\ar@{=}[r]\ar[d]& \widehat{D\Sigma}\\
		 &B\SOr(n)\ar@{=}[r]&B\SOr(n)&
		}
	\end{xy}
\]
where $\widehat{D\Sigma}, \widehat{S\Sigma}$ are the classifying spaces of $D\Sigma$ and $S\Sigma$, respectively. Applying the 5-lemma again to the long exact sequence in homotopy induced by the fibrations above, it follows that the map $\widehat{S\Sigma}\to \widehat{D\Sigma}$ induced by the inclusion ${S\Sigma}\subset{D\Sigma}$ is $(k-1)$-connected. In particular, $\pi_m^{\orb}(S\Sigma)\to \pi_m^\orb(D\Sigma)$ is an isomorphism for $m\leq k-2$, and surjective for $m=k-1$.
\\

Apply now Seifert--Van Kampen theorem to the partition $\mathcal{O}=U\cup V$ where $U= D\Sigma$ and $V=\mathcal{O}\setminus \Sigma$ (if $\Sigma$ is disconnected then we can apply Seifert--Van Kampen to each connected component of $\Sigma$ separately, or apply the groupoid version of Seifert--Van Kampen). Let us name the inclusions
\[
i_U:U\cap V\to U,\quad  i_V:U\cap V\to V,\quad j_U:U\to \Orb,\quad j_V:V\to \Orb.
\]
Then $\pi_*^\orb(U\cap V)\simeq \pi_*^\orb(S\Sigma)$, and $\pi_1^\orb(\Orb)$ is the pushout of
\[
\pi_1^\orb(U)\stackrel{{i_U}_*}{\longleftarrow} \pi_1^\orb(U\cap V)\stackrel{{i_V}_*}{\longrightarrow} \pi_1^\orb(V)
\]
but since the map ${i_U}_*$ is an isomorphism by the discussion above, it follows that the pushout of the diagram is isomorphic to $\pi_1^\orb(V)=\pi_1^\orb(\Orb\setminus \Sigma)$. Therefore, $\pi_1^\orb(\Orb\setminus \Sigma)\simeq \pi_1^\orb(\Orb)\simeq 0$.

Finally, by excision we have $H_{m}^\orb(\Orb, \Orb\setminus \Sigma)\simeq H_m^\orb(D\Sigma, S\Sigma)\simeq 0$ for $m\leq k-1$. By the relative Hurewicz isomorphism, this implies $\pi_m(\Orb, \Orb\setminus \Sigma)=0$ for all $m\leq k-1$, thus finishing the proof.
\end{proof}

Since the orbifold strata of $\Orb$ are special examples of strong suborbifolds, the previous proposition implies the following.

\begin{cor}\label{cor:complement_connected} Let $\Orb$ be a simply connected $n$-orbifold. Then the pair $(\Orb, \Orb\setminus\Sigma_{\geq k})$ is $(k-1)$-connected in the orbifold topology, where $\Sigma_{\geq k}$ denotes the union of all strata of codimension at least $k$.
\end{cor}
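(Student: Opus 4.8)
The plan is to reduce the statement to Proposition~\ref{P:connected-complement} by removing the singular strata of $\Orb$ one codimension at a time, starting from the largest codimension and working downwards. For $j\geq 1$ I would write $\Orb_j:=\Orb\setminus\Sigma_{\geq j}$, so that $\Orb_j=\Orb$ for $j>n$, $\Orb_1$ is the regular part $\Sigma_0$, and $\Orb_k$ is the subset in the statement. Since $p\mapsto\mathrm{codim}\,\mathrm{Fix}(\Gamma_p)$ is upper semicontinuous, $\Sigma_{\geq j}$ is closed in $\Orb$; hence $\Sigma_j=\Sigma_{\geq j}\setminus\Sigma_{\geq j+1}$ is closed in the open suborbifold $\Orb_{j+1}$, where it is precisely the top (highest-codimension) stratum, so it is a closed strong suborbifold of $\Orb_{j+1}$ of codimension $j$ (orbifold strata being strong suborbifolds, as recalled above).

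The heart of the argument is a downward induction on $j$ for $k\leq j\leq n+1$, where we may assume $k\geq 2$ (the case $k=1$ only asks that $\Sigma_0$ meet every component of $\Orb$, which is immediate from density of the regular part). I would prove simultaneously that: (a) the pair $(\Orb,\Orb_j)$ is $(j-1)$-connected in the orbifold topology; and (b) $\Orb_j$ is simply connected whenever $j\geq 3$. The base case $j=n+1$ is trivial, as $\Orb_{n+1}=\Orb$. For the step, assuming the claim for $j+1$ and noting that $j+1\geq 3$ (since $j\geq k\geq 2$), so that $\Orb_{j+1}$ is simply connected by (b), Proposition~\ref{P:connected-complement} applied to $\Orb_{j+1}$ and the stratum $\Sigma_j$ gives that $(\Orb_{j+1},\Orb_j)$ is $(j-1)$-connected; feeding this together with the inductive $j$-connectedness of $(\Orb,\Orb_{j+1})$ into the long exact sequence of the triple $(\Orb,\Orb_{j+1},\Orb_j)$ shows $(\Orb,\Orb_j)$ is $(j-1)$-connected, and when $j\geq 3$ the pair $(\Orb_{j+1},\Orb_j)$ is in particular $2$-connected, which forces $\pi_1^{\orb}(\Orb_j)\cong\pi_1^{\orb}(\Orb_{j+1})=1$. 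Taking $j=k$ yields the corollary.

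I expect the genuinely delicate point to be the interplay of (a) and (b): Proposition~\ref{P:connected-complement} requires a simply connected ambient orbifold at every step, but deleting a codimension-$2$ stratum in general destroys simple connectedness (e.g.\ $\R^2(p)$ minus its cone point has fundamental group $\Z$), so property (b) can only be propagated while the strata removed have codimension $\geq 3$. This is exactly what happens here, since before the stratum $\Sigma_k$ is removed one only deletes strata of codimension $\geq k+1\geq 3$; this is what makes the induction run through to $j=k$ and produce the claimed $(k-1)$-connectedness. The remaining ingredients --- verifying that each $\Sigma_j$ is a closed strong suborbifold of $\Orb_{j+1}$, and the standard homotopy long-exact-sequence bookkeeping for the triples (with the usual conventions at degrees $0$ and $1$, where the relevant sequences are of pointed sets) --- are routine, and degenerate cases such as an empty $\Sigma_j$, or $\Orb$ already a manifold, are vacuous.
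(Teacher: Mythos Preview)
Your proposal is correct and is precisely the argument the paper's one-line proof leaves implicit: the paper simply notes that the orbifold strata are strong suborbifolds and says the corollary follows from Proposition~\ref{P:connected-complement}, which in practice means exactly the downward induction over codimensions that you carry out. Your care with the simple-connectedness hypothesis at each step (your point (b)) and with the verification that each $\Sigma_j$ is a closed strong suborbifold of $\Orb_{j+1}$ fills in the details the paper omits.
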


Now we can prove our first obstruction theorem.

\begin{proof}[Proof of Theorem \ref{thm:main_noncompact}]
In dimension $1$ no bad orbifolds exist. In dimension $2$ any bad orbifold is compact. Hence, we can assume that the dimension is at least $3$.

By Lemma \ref{L:improved-no-2-torsion} all local groups have odd order. Lemma \ref{L:odd-order} then implies that all singular strata have even codimension.

Assume by contradiction that $\mathcal{O}$ is not a manifold, and let $2k$ denote the lowest codimension of a singular stratum. Let $\Sigma_{\geq 2k+2}$ be the union of strata of codimension $\geq 2k+2$. By Corollary \ref{cor:complement_connected} and the fact that $\Orb$ is $2k$-connected, the complement $\Orb':=\Orb\setminus \Sigma_{\geq 2k+2}$ is $2k$-connected as well, and it contains only singular strata of codimension $2k$, which we call $\Sigma$.

By abuse of notation we denote by $\Sigma\subseteq \Orb'$ the connected component of $\Sigma$, and let $D\Sigma$ be a tubular neighbourhood of $\Sigma$ and $S\Sigma:=\partial D\Sigma$. By the proof of Proposition \ref{P:connected-complement}, the sets $D\Sigma$ and $\Sigma$ are in fact suborbifolds of $\Orb$, and the map $\widehat{S\Sigma}\to \widehat{D\Sigma}$ between their classifying spaces induced by the inclusion $S\Sigma\to D\Sigma$ is an $S^{2k-1}$-bundle. By the assumption on the structure of the singular set, $S\Sigma$ is a manifold and in fact $\widehat{S\Sigma}$($\simeq S\Sigma$) has finite dimensional cohomology. On the other hand, letting $\Gamma$ be the local group at $\Sigma$ one has that $H^*(\widehat{D\Sigma})=H^*_\orb(D\Sigma)$ is nonzero in infinitely many degrees.

From the Gysin sequence of $S\Sigma\to D\Sigma$ there exists an element $e\in H^{2k}_\orb(D\Sigma)$ with a long exact sequence
\[
\ldots H^{j}_\orb(S\Sigma)\to H^{j-(2k-1)}_\orb(D\Sigma)\stackrel{\cup e}{\longrightarrow}H^{j+1}_\orb(D\Sigma)\to H^{j+1}_\orb(S\Sigma)\to \ldots
\]
By choosing an index $j$ such that $H^{\geq j}_\orb(S\Sigma)=0$ and $H^{j-(2k-1)}_\orb(D\Sigma)\neq 0$, it follows that $e\neq 0$. By then choosing $j=2k-1$ one gets
\[
H^{2k-1}_\orb(S\Sigma)\to H^{0}_\orb(D\Sigma)\stackrel{\cup e}{\longrightarrow}H^{2k}_\orb(D\Sigma)\to H^{2k}_\orb(S\Sigma)\to \ldots
\]
i.e. $0 \neq e\in \ker(j_1^*:H^{2k}_\orb(D\Sigma)\to H^{2k}_\orb(S\Sigma))$. Apply now the Mayer-Vietoris sequence to the partition $(D\Sigma, \Orb'\setminus\Sigma)$ of $\Orb'$:
\[
0=H^{2k}_\orb(\Orb')\to H^{2k}_\orb(D\Sigma)\oplus H^{2k}_\orb(\Orb'\setminus\Sigma)\stackrel{j_1^*-j_2^*}{\longrightarrow} H^{2k}_\orb(S\Sigma)\to \ldots
\]
But this is a contradiction, because of $0\neq (e,0)\in \ker(j_1^*-j_2^*)$.
\end{proof}

In the compact case we can show more by exploiting Lefschetz duality.

\begin{prp}\label{prp:compact_even}
Any compact $(2n-2)$-connected $2n$-orbifold $\mathcal{O}$, $n\geq 3$, is actually a manifold.
\end{prp}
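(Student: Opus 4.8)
The plan is to argue by contradiction — assuming $\Orb$ is not a manifold — and to reduce to the case in which every singular stratum is an isolated point, which is then settled using Poincaré–Lefschetz duality.

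First I would run the opening of the proof of Theorem \ref{thm:main_noncompact}. Applying Lemma \ref{L:improved-no-2-torsion} with $2^{a-1}\le 2n-2$ shows that every local group lying in a stratum of codimension $<2^a$ has odd order, so by Lemma \ref{L:odd-order} every singular stratum of codimension $<2^a$ has even codimension; since $2^a\ge 2n$, the lowest codimension of a singular stratum is therefore either an even number $2k\le 2n-2$, or else $2n$, in which case every singular stratum is an isolated point. In the first case $\Orb$ is $2k$-connected, and the argument of Theorem \ref{thm:main_noncompact} applies without change: one passes to the $2k$-connected orbifold $\Orb'=\Orb\setminus\Sigma_{\ge 2k+2}$, whose singular set lies only in codimension $2k$, and the Gysin sequence of $\widehat{S\Sigma}\to\widehat{D\Sigma}$ together with a Mayer--Vietoris sequence produce a nonzero class in $H^{2k}_\orb(\Orb')=0$, a contradiction. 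So it remains to treat the case of isolated singularities.

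Assume the singular set is $\{p_1,\dots,p_s\}$ ($s\ge1$), with local groups $\Gamma_i\subset\SOr(2n)$. Since $p_i$ is the only singular point in a neighbourhood of $p_i$, the group $\Gamma_i$ acts freely on $S^{2n-1}$, so $L_i:=S^{2n-1}/\Gamma_i$ is a closed oriented $(2n-1)$-manifold with $\pi_1(L_i)\cong\Gamma_i$ (here $2n-1\ge5$). Removing open cone neighbourhoods of the $p_i$ yields a compact $2n$-manifold $N$ with $\partial N=\bigsqcup_iL_i$ and $\Orb=N\cup\bigsqcup_i c(L_i)$, where the cone $c(L_i)=\R^{2n}/\Gamma_i$ has Borel construction homotopy equivalent to $B\Gamma_i$. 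The classifying map $L_i\to B\Gamma_i$ is a $\pi_1$-isomorphism, so Van Kampen gives $\pi_1(N)\cong\pi_1^\orb(\Orb)=1$; being $(2n-1)$-connected, it is also an isomorphism on $H^j(\,\cdot\,;\Z)$ for $j\le 2n-2$. Combining this with the $(2n-2)$-connectedness of $\Orb$ in the Mayer--Vietoris sequence for $\Orb=N\cup\bigsqcup_ic(L_i)$ (intersection $\bigsqcup_iL_i$) forces $H^j(N;\Z)=0$ for $1\le j\le 2n-2$, hence, by universal coefficients, $H_j(N;\Z)=0$ for $1\le j\le 2n-3$ with $H_{2n-2}(N;\Z)$ torsion. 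Now Poincaré--Lefschetz duality gives $H_k(N,\partial N;\Z)\cong H^{2n-k}(N;\Z)$, which vanishes for $2\le k\le 2n-1$, while $H_{2n}(N,\partial N;\Z)=\Z$. Descending the long exact sequence of the pair $(N,\partial N)$ — using that the boundary of the fundamental class of $(N,\partial N)$ is $\sum_i[L_i]$, and that $H_{2n-2}(L_i)\cong H^1(L_i)=\Hom(\Gamma_i^{\mathrm{ab}},\Z)=0$ by Poincaré duality on $L_i$ — yields $H_{2n-1}(N)\cong\Z^{s-1}$, $H_{2n-2}(N)=0$, and then inductively $H_k(L_i;\Z)=0$ for all $1\le k\le 2n-2$. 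Thus each $L_i$ is an integral homology $(2n-1)$-sphere (and $N$ is $(2n-2)$-connected).

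The final step is to invoke the classification of integral homology spheres that are covered by a round sphere, cf.\ \cite{Sje73}: a spherical space form $S^{2n-1}/\Gamma$ of dimension $2n-1\ge5$ that is a $\Z$-homology sphere must have $\Gamma$ trivial, the only nontrivial example being the Poincaré sphere $S^3/2I$ in dimension $3$ — which is precisely why $n\ge3$ is assumed. Hence every $\Gamma_i$ is trivial, contradicting that $p_i$ is singular; so $\Orb$ is a manifold. I expect this last step to be the main obstacle: one needs the sharp statement that for $2n-1\ge5$ no nontrivial finite group acts freely and orthogonally on $S^{2n-1}$ with integral homology sphere quotient — equivalently, that no nontrivial finite group with periodic cohomology has $H_k(\,\cdot\,;\Z)=0$ for all $1\le k\le 2n-2$. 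The Mayer--Vietoris and Lefschetz-duality bookkeeping preceding it is routine but must be handled with care in the top two or three degrees, where the torsion group $H^{2n-1}_\orb(\Orb;\Z)$ and the fundamental classes of $N$ and of the $L_i$ interact.
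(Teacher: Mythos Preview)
Your argument is correct and follows essentially the same route as the paper: reduce to isolated singularities via the proof of Theorem~\ref{thm:main_noncompact}, use Lefschetz duality on the compact manifold complement to force each link $S^{2n-1}/\Gamma_i$ to be an integral homology sphere, and then cite \cite{Sje73}. The only difference is that the paper shortcuts your Mayer--Vietoris/universal-coefficients computation by invoking Proposition~\ref{P:connected-complement} directly, which already gives that the complement $V=\Orb\setminus B_\epsilon(\Sigma)$ is $(2n-2)$-connected and hence that $H_j(V)=0$ for $1\le j\le 2n-2$ without further work.
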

\begin{proof} By the preceding proof we can assume that every singular point is isolated. We denote the union of these singular points by $\Sigma$. Since $\Sigma$ is finite, for sufficiently small $\epsilon>0$ the $\epsilon$-balls around these singular points are disjoint. For such an $\epsilon$ we set $U=B_\epsilon(\Sigma)$.
By Proposition \ref{P:connected-complement} the complement $V=\Orb\setminus U$ is a compact $(2n-2)$-connected manifold with boundary, and by Lefschetz duality $H^i_{\orb}(V)=H_{2n-i}^{\orb}(V,V\cap U)$. In particular $H_i^\orb(V,V\cap U)=0$ for $i=2,\ldots,2n-1$. Via the long exact sequence in homology for the pair $(V,V\cap U)$ one gets for $i=2,\ldots,2n-1$ that
\[
0=H_i^\orb(V,V\cap U)\to H_{i-1}^{\orb}(V\cap U)\to H_{i-1}^{\orb}(V)=0.
\]
Hence, each component of $V\cap U$ is homotopy equivalent to a homology $(2n-1)$-sphere. On the other hand, $U\cap V\simeq S^{2n-1}/\Gamma_p$, where the local group $\Gamma_p$ acts freely on $S^{2n-1}$. Based on work by Zassenhaus \cite{Zas35} it was shown in \cite{Sje73} that there are no homology spheres in dimensions higher than $3$ which are covered by a genuine sphere. Because of $(2n-1)>3$ this implies that $\Gamma_p$ is trivial and so the claim follows.
\end{proof}

A slight modification of the proof of Proposition \ref{prp:compact_even} also shows the following statement.

\begin{prp}\label{prp:compact_odd}
Let $n>3$ be an integer that is not a power of $2$. Any compact $(2n-2)$-connected $(2n+1)$-orbifold $\mathcal{O}$ is actually a manifold.
\end{prp}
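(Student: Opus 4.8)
The plan is to mimic the proof of Proposition \ref{prp:compact_even}, reducing to the case where the singular set $\Sigma$ consists of finitely many isolated points. First I would apply Lemma \ref{L:improved-no-2-torsion} and Lemma \ref{L:odd-order} to conclude that every singular stratum has even codimension, hence (since the ambient dimension $2n+1$ is odd) no stratum can have codimension $2n+1$; combined with the stratification-reduction argument of the proof of Theorem \ref{thm:main_noncompact} — peeling off high-codimension strata via Corollary \ref{cor:complement_connected} while staying $(2n-2)$-connected — this would force the lowest-codimension singular stratum to have codimension $2n$, i.e. to be isolated points. Writing $U=B_\epsilon(\Sigma)$ for $\epsilon$ small enough that the balls are disjoint, and $V=\Orb\setminus U$, Proposition \ref{P:connected-complement} shows $V$ is a compact $(2n-2)$-connected manifold with boundary, and each component of $U\cap V$ is homotopy equivalent to $S^{2n}/\Gamma_p$ with $\Gamma_p$ acting freely on $S^{2n}$.

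The key difference from the even case is the duality bookkeeping: here $\dim V=2n+1$, so Lefschetz duality gives $H^i_\orb(V)\cong H^\orb_{2n+1-i}(V,V\cap U)$, whence $H^\orb_i(V,V\cap U)=0$ for $i=2,\ldots,2n$. Feeding this into the long exact sequence of the pair $(V,V\cap U)$ exactly as before yields $H^\orb_{i-1}(V\cap U)=0$ for $i=2,\ldots,2n$, so each component of $U\cap V$ is a homology $2n$-sphere that is covered by the genuine sphere $S^{2n}$. Now I would invoke the classification from \cite{Sje73}: for the even-dimensional sphere $S^{2n}$, the only group acting freely is $\Z_2$ (or the trivial group), since a free action on an even sphere forces the group to act by degree $\pm1$ maps with no fixed points, which only $\Z_2$ can do. But Lemma \ref{L:improved-no-2-torsion} already excludes $2$-torsion in the local groups of a $(2n-2)$-connected orbifold provided the codimension $2n$ lies below the relevant threshold $2^a$ — and this is exactly where the hypothesis ``$n$ not a power of $2$'' enters: choosing $a$ with $2^{a-1}\leq 2n-2 < 2n \leq 2^a$ is possible precisely when $2n$ is not itself a power of $2$, i.e. when $n$ is not a power of $2$. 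Hence $\Gamma_p$ has no $2$-torsion, so $\Gamma_p$ is trivial and $\Orb$ is a manifold.

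The main obstacle, and the point requiring care, is the interaction between the connectivity hypothesis and the ``no $2$-torsion'' lemma: one must verify that for $n$ not a power of $2$ there is an integer $a$ with $2^{a-1}\leq 2n-2$ and $2n<2^a$, so that Lemma \ref{L:improved-no-2-torsion} applies to strata of codimension $2n$; equivalently $2n$ must lie strictly between two consecutive powers of $2$, which fails exactly when $2n$ (hence $n$) is a power of $2$. A secondary point is confirming that the stratification-peeling argument still delivers a $(2n-2)$-connected orbifold after removing strata of codimension $\geq 2n+2$ — but since $2n+2 > 2n-1$, Corollary \ref{cor:complement_connected} gives the pair $(\Orb,\Orb\setminus\Sigma_{\geq 2n+2})$ is $(2n+1)$-connected, more than enough. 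I would also double-check that, as in Proposition \ref{prp:compact_even}, the local group acting on $S^{2n}$ is genuinely free (not merely almost free), which follows because we have reduced to isolated singular points whose link is a spherical space form.
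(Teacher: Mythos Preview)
There is a genuine dimension error that breaks your argument. You correctly argue that the only remaining singular stratum has codimension $2n$, but in a $(2n+1)$-orbifold a codimension-$2n$ stratum is \emph{one-dimensional}, not zero-dimensional: the singular set is a disjoint union of embedded circles, not isolated points. Consequently the local group $\Gamma_p$ acts freely on the normal sphere $S^{2n-1}$, not on $S^{2n}$, and a component of $U\cap V$ is (up to homotopy) an $S^{2n-1}/\Gamma_p$-bundle over $S^1$, i.e. essentially $S^1\times S^{2n-1}/\Gamma_p$, not a spherical space form $S^{2n}/\Gamma_p$. Your Lefschetz bookkeeping is also off by one: from $(2n-2)$-connectedness one gets $H_i^\orb(V,V\cap U)=0$ only for $i=3,\dots,2n$, so the long exact sequence yields $H_i^\orb(V\cap U)=0$ for $i=2,\dots,2n-2$, not all the way up to $2n-1$.

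With the corrected geometry, the argument proceeds differently from what you sketched. One applies K\"unneth to $S^1\times S^{2n-1}/\Gamma_p$ to deduce that $S^{2n-1}/\Gamma_p$ is an integral homology $(2n-1)$-sphere, and then invokes \cite{Sje73} (no homology sphere of dimension $>3$ is nontrivially covered by a genuine sphere) to conclude $\Gamma_p=1$. The elementary fact ``only $\Z_2$ acts freely on an even sphere'' plays no role here, since the relevant sphere is odd-dimensional. Your analysis of where the hypothesis ``$n$ not a power of $2$'' enters --- namely, to guarantee an $a$ with $2^{a-1}\le 2n-2$ and $2n<2^a$, so that Lemma~\ref{L:improved-no-2-torsion} reaches codimension $2n$ --- is correct; its role is to ensure the reduction to embedded circles (odd local groups, hence only even codimension strata), after which the Sjerve argument finishes the proof.
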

\begin{proof} By the proof of Theorem \ref{thm:main_noncompact} we can assume that the singular set is a disjoint union of embedded circles. By Proposition \ref{P:connected-complement} the complement $V=\Orb\setminus B_\epsilon(\Sigma)$ is a compact $(2n-2)$-connected manifold with boundary, and by Lefschetz duality $H^i_{\orb}(V)=H_{2n+1-i}^{\orb}(V,V\cap U)$. In particular $H_i^\orb(V,V\cap U)=0$ for $i=3,\ldots,2n$. Via the long exact sequence in homology for the pair $(V,V\cap U)$ one gets for $i=3,\ldots,2n-1$ that
\[
0=H_i^\orb(V,V\cap U)\to H_{i-1}^{\orb}(V\cap U)\to H_{i-1}^{\orb}(V)=0, 
\]
i.e. $H_{i}^{\orb}(V\cap U)=0$ for $i=2,\ldots,2n-2$. On the other hand, we know that each component of $V\cap U$ is homotopy equivalent to $S^1 \times S^{2n-1}/\Gamma_p$, where the local group $\Gamma_p$ acts freely on $S^{2n-1}$. The homological computation above together with the K\"unneth formula implies that $S^{2n-1}/\Gamma_p$ is a homology $(2n-1)$-sphere. Because of $(2n-1)>3$ this again implies that $\Gamma_p$ is trivial \cite{Sje73}.
\end{proof}

\end{document}